
\documentclass[11pt]{amsart}
\usepackage[lmargin=1.25in, rmargin=1.25in, tmargin=1.25in,bmargin=1.25in]{geometry}
\usepackage{amsthm}
\usepackage{amssymb}
\usepackage{MnSymbol}
\usepackage{amsxtra}     
\usepackage{epsfig}
\usepackage{verbatim}
\usepackage{url}
\usepackage[all]{xypic}


\theoremstyle{plain}
\newtheorem{thm}{Theorem}
\newtheorem{prop}[thm]{Proposition}
\newtheorem{cor}[thm]{Corollary}
\newtheorem{lem}[thm]{Lemma}

%

\theoremstyle{definition}

\theoremstyle{remark}
\newtheorem{rmk}[thm]{Remark}

\include{header}











\newcommand{\adeles}{\mathbb{A}}

\newcommand{\Ind}{\mathrm{Ind}}

\newcommand{\hn}{\mathcal{H}_n}
\newcommand{\A}{\univav}
\newcommand{\OK}{\mathcal{O}_{\cmfield}}
\newcommand{\Oe}{\mathcal{O}_E}
\newcommand{\Hom}{\mbox{Hom}}
\newcommand{\dual}{^\vee}
\newcommand{\isomto}{\overset{\sim}{\rightarrow}}

\newcommand{\ci}{C^{\infty}}

\newcommand{\uo}{\underline{\omega}}

\newcommand{\padic}{p\mathrm{-adic}}


\newcommand{\cmfield}{K}


\newcommand{\uA}{\underline{A}}

\newcommand{\gln}{\mathrm{GL}_n}
\newcommand{\gl}{\mathrm{GL}}

\newcommand{\e}{{\mathbf e}}

\newcommand{\IR}{\mathbb{R}}
\newcommand{\ZZ}{\mathbb{Z}}
\newcommand{\IC}{\mathbb{C}}
\newcommand{\IQ}{\mathbb{Q}}
\newcommand{\univav}{A_{\mathrm{univ}}}

\newcommand{\End}{\mbox{End}}

\newcommand{\Cp}{\IC_p}
\newcommand{\OCp}{\mathcal{O}_{\Cp}}

\newcommand{\Gal}{\mathrm{Gal}}

\newcommand{\hern}{\mathrm{Her}_n}
\newcommand{\Sgn}{\mathrm{Sign}}

\newcommand{\Sh}{\mathrm{Sh}}
\newcommand{\V}{\mathcal{V}}
\newcommand{\Vnn}{\V_{n,n}}
\newcommand{\ellcan}{\ell_{\mathrm{can}}}

\renewcommand{\paragraph}[1]{\noindent\textbf{#1}. }


 
\def\tr{{\rm tr}\,}


\begin{document}

\bibliographystyle{amsalpha}       
\title[An Eisenstein measure for vector-weight automorphic forms]{A $p$-adic Eisenstein Measure for vector-weight automorphic forms}
\author{Ellen Eischen}
\thanks{The author is partially supported by National Science Foundation Grant DMS-1249384.}
\address{Ellen Eischen\\
Department of Mathematics\\
The University of North Carolina at Chapel Hill\\
CB \#3250\\
Chapel Hill, NC 27599-3250\\
USA}
\email{eeischen@email.unc.edu}
\maketitle

\begin{abstract}
We construct a $p$-adic Eisenstein measure with values in the space of vector-weight $p$-adic automorphic forms on certain unitary groups. This measure allows us to $p$-adically interpolate special values of certain vector-weight $\ci$-automorphic forms, including Eisenstein series, as their weights vary.  This completes a key step toward the construction of certain $p$-adic $L$-functions.

We also explain how to extend our methods to the case of Siegel modular forms and how to recover Nicholas Katz's $p$-adic families of Eisenstein series for Hilbert modular forms.
\end{abstract}

\setcounter{tocdepth}{2}
\tableofcontents
\setcounter{secnumdepth}{3}

\section{Introduction}
The significance of $p$-adic families of Eisenstein series as a tool in number theory, especially for the construction of $p$-adic $L$-functions, is well-established.  For example, $p$-adic families of Eisenstein series play a key role in constructions of $p$-adic $L$-functions completed by Pierre Deligne, Nicholas Katz, Kenneth Ribet, and Jean-Pierre Serre \cite{serre, DR, kaCM}.  In a completely different direction, $p$-adic families of Eisenstein series also play a role in homotopy theory \cite{hopkins94, hopkinsicm, AHR}.

Each of the constructions mentioned above concerns only automorphic forms of scalar weight.  Automorphic forms on groups of rank $1$ (for example, modular forms and Hilbert modular forms, which are the forms with which Katz, Deligne, Ribet, and Serre worked) can only have scalar weights.  Automorphic forms on groups of higher rank, however, need not have scalar weights.  

By a vector-weight automorphic form, we mean an automorphic form whose weight is an irreducible representation with highest weight $\lambda_n\geq \cdots\geq \lambda_1$ is not required to have $\lambda_i = \lambda_{i+1}$ for all $i$, i.e. an automorphic form whose weight is not required to be a one-dimensional representation.  In order to complete a construction of $p$-adic $L$-functions for automorphic forms on unitary groups in full generality as in \cite{EEHLS}, one needs a $p$-adic Eisenstein measure that takes values in the space of $p$-adic vector-weight automorphic forms.  (By an {\it Eisenstein measure}, we mean a $p$-adic measure valued in a space of $p$-adic automorphic forms and whose values at locally constant functions are Eisenstein series.)  
 
The main result of this paper is the construction in Section \ref{emeasuresection} of a $p$-adic measure that takes values in the space of automorphic forms on unitary groups of signature $(n,n)$.  In particular, Theorem \ref{emeasurethm} gives a $p$-adic Eisenstein measure with values in the space of vector-weight automorphic forms.    As explained in Theorem \ref{AVmeasure}, this measure together with the results of Section \ref{diffopssection} allows us to $p$-adically interpolate the values of certain vector-weight $\ci$ (not necessarily holomorphic) automorphic forms, including Eisenstein series, as the (highest) weights of these automorphic forms vary.  Note that this is the first ever construction of a $p$-adic Eisenstein measure taking values in the space of {\it vector}-weight automorphic forms on unitary groups.

We follow the approach of \cite[Chapters 4 and 5]{kaCM} more closely than we did in \cite{AppToSHL}.  (In \cite{AppToSHL}, we constructed a $p$-adic Eisenstein measure for scalar-weight automorphic forms on unitary groups of signature $(n,n)$.)  As a consequence, in the final section of this paper, we easily recover Katz's Eisenstein measure from \cite[Chapters 4 and 5]{kaCM} as a special case of our results.

We also explain in Section \ref{symplectickatz} how to generalize the results of Section \ref{emeasuresection} to the case of Siegel modular forms, i.e. automorphic forms on symplectic groups.  In that setting, in the case where $n=1$, we are in exactly the situation of \cite{kaCM}, in which Katz constructs a $p$-adic Eisenstein measure for Hilbert modular forms.  As demonstrated in Section \ref{kacmnequals1}, the setup in the earlier sections of the paper makes the connection between our Eisenstein measure and the Eisenstein measure in \cite[Definition (4.2.5) and Equation (5.5.7)]{kaCM} almost transparent.

\subsection{Applications and context}
The main anticipated application of this paper is to the construction of $p$-adic $L$-functions for unitary groups, most immediately to \cite{EEHLS}.  While we do not construct the $p$-adic $L$-functions in this paper (since that is the topic of the joint paper \cite{EEHLS}), the results of this paper play a key role in the construction of those $p$-adic $L$-functions.  In particular, the $L$-functions in that paper are obtained through the ``doubling method'' (an approach described in \cite[Part A]{GPSR} and \cite[Section 2]{co}), which expresses values of $L$-functions in terms of values of Eisenstein series and values of cusp forms.  The $p$-adic Eisenstein measure in \cite[Section 4]{AppToSHL} suffices in the case of scalar weights, but if one does not restrict to scalar weights, one needs the results of the present paper.

The behavior of certain $L$-functions (for example, for unitary groups) is strongly tied to the behavior of certain Eisenstein series.  For instance, as explained in \cite[Introduction]{shar}, Shimura uses the algebraicity (up to a well-determined period) of values of Eisenstein series at CM points to prove the algebraicity (up to a well-determined period) of certain values of corresponding $L$-functions (normalized by a  period).  Analogously, as explained in \cite[Introduction]{kaCM}, Katz uses the $p$-adic interpolation of values of certain Eisenstein series (normalized by a period) at CM points to $p$-adically interpolate certain values of $L$-functions (normalized by a period).  Similarly, the $p$-adic families of Eisenstein series in the present paper play a key role in determining the behavior of the $L$-functions in \cite{EEHLS}.

\subsection{Overview and structure of the paper}

In Section \ref{conventionsbkdsection}, we introduce the conventions with which we will work, as well as standard background results necessary for this paper.  The conventions and background are similar to those in \cite[Section 2]{AppToSHL} and \cite{EDiffOps}.  The background is quite technical; we have summarized just what is needed for this paper.  The reader can find substantial reference materials on the background; for the reader seeking details on the background material, we recommend \cite{sh, shar} for the theory of $\ci$-automorphic forms and Eisenstein series on unitary groups, \cite{lanalgan, la} for the algebraic geometric background and a discussion of algebraically defined $q$-expansions, and \cite{hida, hi05} for the theory of $p$-adic automorphic forms.

In Section \ref{eseriesintrosection}, which relies in part on the results of \cite[Section 2]{AppToSHL}, we define certain scalar-weight Eisenstein series and automorphic forms on unitary groups of signature $(n,n)$.  Note that this set of automorphic forms includes the Eisenstein series defined in \cite[Section 2]{AppToSHL} but also includes other automorphic forms as well.  We need this larger space of automorphic forms in order to construct a $p$-adic measure with values in the space of vector-weight automorphic forms in Section \ref{emeasuresection}, whereas in \cite{AppToSHL}, we only were concerned with $p$-adic families of scalar-weight automorphic forms.  Like in \cite{AppToSHL}, we work adelically.  The formulation of the main result of this section (Theorem \ref{proplc}) is closer to that of \cite[Theorem (3.2.3)]{kaCM}, though, so that the reader can see parallels with the analogous construction in \cite[Section 3]{kaCM} (which is useful in Section \ref{kacmnequals1} when we compare our Eisenstein measure to the measure obtained in \cite[Definition (4.2.5) and Equation (5.5.7)]{kaCM}).

Section \ref{diffopssection} discusses differential operators that are necessary for comparing the values of certain $\ci$-automorphic forms and certain $\padic$-automorphic forms.  These differential operators are closely related to the differential operators discussed in \cite[Sections 8 and 9]{EDiffOps}.  Note that because we work with vector-weight automorphic forms, and not just scalar-weight automorphic forms, in this paper, we need more differential operators than we did in \cite{AppToSHL}, which handled only the case of scalar-weight automorphic forms.

Section \ref{emeasuresection} contains the main results of the paper, namely the construction of a $p$-adic Eisenstein measure and the $p$-adic interpolation of values of certain automorphic forms.  This is the heart of the paper.  The format of Section \ref{emeasuresection} closely parallels the construction of a $p$-adic Eisenstein measure in \cite[Sections 3.4 and 4.2]{kaCM}.  We also explain in Remark \ref{rmkapptoshl} precisely how the Eisenstein measure of \cite[Section 4]{AppToSHL} and the Eisenstein measure given in Theorem \ref{emeasurethm} are related.  Note that for $n\geq 2$, the measure in Theorem \ref{emeasurethm} is on a larger group than the the measure in \cite[Section 4]{AppToSHL}.  In order to construct a measure with values in the space of {\it vector}-weight automorphic forms without fixing a partition of $n$, this larger group is necessary.  (The approach in \cite{AppToSHL} relied on a choice of a partition of $n$, but it turns out that with this larger group, we do not need to fix a partition of $n$ and can consider a larger class of automorphic forms all at once.)  We also note that the construction of the measures in \cite[Section 4]{emeasurenondefinite} uses this measure as a starting point.

In Section \ref{symplectickatz}, we comment on how to extend the results of this paper to the case of Siegel modular forms, i.e. automorphic forms on symplectic groups.  The fact that our presentation in Section \ref{emeasuresection} closely follows the approach in \cite[Sections 3.4 and 4.2]{kaCM} also allows us to recover the Eisenstein measure of \cite[Definition (4.2.5) and Equation (5.5.7)]{kaCM} with ease in Section \ref{kacmnequals1}.

\subsection{Acknowledgements}

I would like to thank Chris Skinner for helpful conversations while working on this project.  I would also like to thank Kai-Wen Lan for clarifying my understanding of the $q$-expansion principle for automorphic forms on unitary groups of signature $(n,n)$ and symplectic groups.

\section{Conventions and Background}\label{conventionsbkdsection}
In Section \ref{conventionssection}, we introduce the conventions that we will use throughout the paper.  In Section \ref{backgroundsection}, we briefly summarize the necessary background on automorphic forms on unitary groups.  (A more detailed discussion of automorphic forms is presented in various references, including \cite{sh, shar, la, hida, EDiffOps}; details in the analogous case of Hilbert modular forms are covered in \cite[Section 1]{kaCM}.)

\subsection{Conventions}\label{conventionssection}
Once and for all, fix a CM field $K$ with maximal totally real subfield $E$.  Fix a prime $p$ that is unramified in $K$ and such that each prime of $E$ dividing $p$ splits completely in $K$.  Fix embeddings
\begin{align*}
\iota_\infty: &\bar{\IQ}\hookrightarrow\IC\\
\iota_p: &\bar{\IQ}\hookrightarrow\IC_p,
\end{align*}
and fix an isomorphism
\begin{align*}
\iota: \bar{\IC}_p\isomto\IC
\end{align*}
satisfying $\iota\circ\iota_p = \iota_{\infty}$.  From here on, we identify $\bar{\IQ}$ with $\iota_p(\bar{\IQ})$ and $\iota_{\infty}(\bar{\IQ})$.  Let $\OCp$ denote the ring of integers in $\IC_p$.

Fix a CM type $\Sigma$ for $K/\IQ$.  For each element $\sigma\in \Hom(E, \bar{\IQ})$, we also write $\sigma$ to denote the unique element of $\Sigma$ prolonging $\sigma:E\hookrightarrow\bar{\IQ}$ (when no confusion can arise).  For each element $x\in K$, denote by $\bar{x}$ the image of $x$ under the unique non-trivial element $\epsilon\in \Gal(K/E)$, and let $\bar{\sigma} = \sigma\circ\epsilon$.

Given an element $a$ of $E$, we identify it with an element of $E\otimes\IR$ via the embedding
\begin{align}\label{embconv}
E&\hookrightarrow E\otimes\IR\\
a&\mapsto (\sigma(a))_{\sigma\in\Sigma}.
\end{align}
We identify $a\in K$ with an element of $K\otimes \IC\isomto (E\otimes\IC)\times (E\otimes \IC)$ via the embedding
\begin{align*}
K&\hookrightarrow K\otimes\IC\\
a&\mapsto \left((\sigma(a))_{\sigma\in\Sigma}, (\bar{\sigma}(a))_{\sigma\in\Sigma}\right).
\end{align*}

Let $d = (d_v)_{v\in\Sigma}\in\ZZ^\Sigma$, and let $a = (a_v)_{v\in\Sigma}$ be an element of $\IC^\Sigma$ or $\IC_p^\Sigma$.  We denote by $a^d$ the element of $\IC$ or $\IC_p$ defined by
\begin{align*}
a^d := \prod_{v\in \Sigma}a_v^{d_v}.
\end{align*}
If $e = \left(e_v\right)_{v\in\Sigma}\in\ZZ^\Sigma$, we denote by $d+e$ the tuple defined by
\begin{align*}
d+e = \left(d_v+e_v\right)_{v\in\Sigma}\in\ZZ^\Sigma.
\end{align*}
If $k\in\ZZ$, we denote by $k+d$ or $d+k$ the element
\begin{align*}
k+d = d+k = \left(d_v+k\right)_{v\in\Sigma}\in\ZZ^\Sigma.
\end{align*}

For any ring $R$, we denote the ring of $n\times n$ matrices with coefficients in $R$ by $M_{n\times n}(R)$ or $M_{n\times n}(R)$.  We denote by $1_n$ the multiplicative identity in $M_{n\times n}(R)$.  Also, for any subring $R$ of $K\otimes_E E_v$, with $v$ a place of $E$, let $\hern(R)$ denote the space of Hermitian $n\times n$-matrices with entries in $R$.  Given $x\in\hern(E)$,
\begin{align*}
x>0
\end{align*}
if $\sigma(x)$ is positive definite for every $\sigma\in\Sigma$.

\subsubsection{Adelic norms}
Let $\left|\cdot\right|_E$ denote the adelic norm on $E^\times\backslash\adeles_E^\times$ such that for all $a\in \adeles_E^\times$, 
\begin{align*}
\left|a\right|_E = \prod_v\left|a\right|_v,
\end{align*}
where the righthand product is over all places of $E$ and where the absolute values are normalized so that
\begin{align*}
\left|v\right|_v & = q_v^{-1},\\
q_v & = \mbox{ the cardinality of } {\Oe}_v/v{\Oe}_v,
\end{align*}
for all non-archimedean primes $v$ of the totally real field $E$.  Consequently for all $a\in E$,
\begin{align*}
\prod_{v\ndivides\infty}|a|_v^{-1} = \prod_{v\in\Sigma}\sigma_v(a)\Sgn(\sigma_v(a)),
\end{align*}
where the product is over all archimedean places $v$ of the totally real field $E$.  We denote by $\left|\cdot\right|_K$ the adelic norm on $K^\times\backslash\adeles_K^\times$ such that for all $a\in \adeles_K^\times$,
\begin{align*}
\left|a\right|_K = \left|a\bar{a}\right|_E.
\end{align*}
For $a\in K$ and $v$ a place of $E$, we let
\begin{align*}
\left|a\right|_v = \left|a\bar{a}\right|_v^{\frac{1}{2}}.
\end{align*}
Given an element $a\in K$, we associate $a$ with an element of $K\otimes \IR$, via the embedding
\begin{align*}
a\mapsto (\sigma(a))_{\sigma\in \Sigma}.
\end{align*}

For any field extension $L/M$, we write $\mathbf{N}_{L/M}$ to denote the norm from $L$ to $M$.  Given a $\mathcal{O}_M$-algebra $R$, the norm map $\mathbf{N}_{L/M}$ on $L$ provides a group homomorphism
\begin{align*}
\left(\mathcal{O}_L\otimes R\right)^\times&\rightarrow R^\times
\end{align*}
in which $a\otimes r\mapsto \mathbf{N}_{L/M}(a)r$.  When the fields are clear, we shall just write $\mathbf{N}$.

\subsubsection{Exponential characters}

For each archimedean place $v\in\Sigma$, denote by $\e_v$ the character of $E_v$ (i.e $\IR$) defined by
\begin{align*}
\e_v(x_v) = e^{(2\pi i x_v)}
\end{align*}
for all $x_v$ in $E_v$.  Denote by $\e_{\infty}$ the character of $E\otimes \IR$ defined by
\begin{align*}
\e_{\infty}((x_v)_{v\in \Sigma}) = \prod_{v\divides\infty}\e_v(x_v).
\end{align*}
Following our convention from \eqref{embconv}, we put
\begin{align*}
\e_{\infty}(a) = \e_{\infty}((\sigma(a))_{\sigma\in\Sigma}) = e^{2\pi i\tr_{E/\IQ}(a)}
\end{align*}
for all $a\in E$.
For each finite place $v$ of $E$ dividing a prime $q$ of $\ZZ$, denote by $\e_v$ the character of $E_v$ defined for each $x_v\in E_v$ by
\begin{align*}
\e_v(x_v) = e^{-2\pi i y}
\end{align*}
where $y\in \IQ$ is the fractional part of $\tr_{E_v/\IQ_q}(x_v)\in \IQ_p$; i.e. writing $\tr_{E_v/\IQ_q}(x_v) = \sum_{i=k}^\infty a_i p^i$ for some integer $k\leq 0$ and $a_i\in \left\{0, \ldots, p-1\right\}$, then $y = \sum_{i=k}^0 a_i p^i$.  We denote by $\e_{\adeles_E}$ the character of $\adeles_E$ defined by
\begin{align*}
\e_{\adeles_E}(x) = \prod_v\e_v\left(x_v\right)
\end{align*}
for all $x = \left(x_v\right)\in \adeles_E$.  
\begin{rmk}
Note that for a $a\in E$, we identify $a$ with the element $(\sigma_v(a))_v\in\adeles_E$, where $\sigma_v: E\hookrightarrow E_v$ is the embedding corresponding to $v$.  Following this convention, we put
\begin{align}\label{ewarning}
\e_{\adeles_E}(a) = \prod_v\e_v(\sigma_v(a)).
\end{align}
for all $a\in E$.
\end{rmk}

\subsubsection{Spaces of functions}
Given topological spaces $X$ and $Y$, we let
\begin{align*}
\mathcal{C}(X, Y)
\end{align*}
denote the space of continuous functions from $X$ to $Y$.

\subsection{Background concerning automorphic forms on unitary groups}\label{backgroundsection}

\subsubsection{Unitary groups of signature $(n,n)$}
We now recall basic information about unitary groups and automorphic forms on unitary groups.  (A more detailed discussion of unitary groups and automorphic forms on unitary groups appears in \cite{sh, shar, la, HLS, EDiffOps}; the analogous background for the case of Hilbert modular forms is the main subject of \cite[Section 1]{kaCM}.)

The material in this section is similar to the material in \cite[Section 2.1]{AppToSHL}.  Although we discussed embeddings of non-definite unitary groups of various signatures into unitary groups of signature $(n,n)$ in \cite[Section 2.1]{AppToSHL}, we are primarily concerned only with unitary groups of signature $(n, n)$ and definite unitary groups in this paper; in the sequel, \cite{emeasurenondefinite}, we discuss pullbacks to various products of unitary groups occurring as subgroups.

Let $V$ be a vector space of dimension $n$ over the CM field $K$, and let $\langle, \rangle_V$ denote a positive definite hermitian pairing on $V$.  Let $-V$ denote the vector space $V$ with the negative definite hermitian pairing $-\langle, \rangle_V$.  Let
\begin{align*}
W& = 2V = V\oplus -V\\
\langle \left(v_1, v_2\right), \left(w_1, w_2\right)\rangle_W & = \langle v_1, w_1\rangle_V+\langle v_2, w_2\rangle_{-V}.
\end{align*}

The hermitian pairing $\langle, \rangle_W$ defines an involution $g\mapsto \tilde{g}$ on $\End_K(W)$ by
\begin{align*}
\langle g(w), w'\rangle_W = \langle w, \tilde{g}(w')\rangle_W
\end{align*}
(where $w$ and $w'$ denote elements of $W$).  Note that this involution extends to an involution on $\End_{K\otimes_E R}\left(W\otimes_E R\right)$ for any $E$-algebra $R$.
We denote by $U$ the algebraic group such that for any $E$-algebra $R$, the $R$-points of $U$ are given by
\begin{align*}
U(R) = U(R, W) = \left\{ g\in \gl_{K\otimes_E R}\left(W\otimes_E R\right)\middle|  g\tilde{g} = 1\right\}.
\end{align*}
Similarly, we define $U(R, V)$ to be the algebraic group associated to $\langle, \rangle_V$ and $U(R, -V)$ to be the algebraic group associated to $\langle, \rangle_{-V}$.  Note that $U(\IR)$ is of signature $(n,n)$.  Also, note that the canonical embedding
\begin{align*}
V\oplus V\hookrightarrow W
\end{align*}
induces an embedding
\begin{align*}
U(R, V)\times U(R, -V)\hookrightarrow U(R, W)
\end{align*}
for all $E$-algebras $R$.  When the $E$-algebra $R$ over which we are working is clear from context or does not matter, we shall write $U(W)$ for $U(R, W)$, $U(V)$ for $U(R, V)$, and $U(-V)$ for $U(R, -V)$.  We also sometimes write just $U$ to denote $U(W)$.

We also have groups 
\begin{align*}
GU(R) = GU(R, W) = \left\{ g\in \gl_{K\otimes_E R}\left(W\otimes_E R\right)\middle|  g\tilde{g}\in R^\times\right\}.
\end{align*}
We use the notation $\omega$ to denote the similitude character
\begin{align*}
\omega: GU(R)&\rightarrow R^\times\\
g&\mapsto g\tilde{g}.
\end{align*}
When the $E$-algebra $R$ over which we are working is clear from context or does not matter, we shall write $GU(W)$ for $GU(R, W)$.  We shall also use the notation
\begin{align*}
G(R) = GU(R, W)
\end{align*}
or write simply $G$ or $GU$ when the ring $R$ is clear from context or does not matter.  When $R = \adeles_E$ or $R=\IR$, we write
\begin{align*}
G_+:=GU_+
\end{align*}
to denote the subgroup of $G = GU$ consisting of elements such that the similitude factor at each archimedean place of $E$ is positive.

For the space $W = V\oplus -V$ defined above, $U(W)$ and $GU(W)$ have signature $(n,n)$.  So we will sometimes write $U(n,n)$ and $GU(n,n)$, respectively, to refer to these groups.

We write $W = V_d\oplus V^d,$ where $V_d$ and $V^d$ denote the maximal isotropic subspaces
\begin{align*}
V^d & = \left\{(v, v)|v\in V\right\}\\
V_d & = \left\{(v, -v)| v\in V\right\}.
\end{align*}
Let $P$ be the Siegel parabolic subgroup of $U(W)$ stabilizing $V^d$ in $V_d\oplus V^d$ under the action of $U(W)$ on the right.  Denote by $M$ the Levi subgroup of $P$ and by $N$ the unipotent radical of $P$.  Similarly, denote by $GP$ the Siegel parabolic subgroup of $GU(W)$ stabilizing $V^d$ in $V_d\oplus V^d$ under the action of $GU(W)$ on the right, and denote by $GM$ the Levi subgroup of $GP$ and by $N$ the unipotent radical of $GP$.  We also, similarly, denote by $GP_+$ the Siegel parabolic subgroup of $GU_+$ stabilizing $V^d$ in $V_d\oplus V^d$ under the action of $GU_+$ on the right, and denote by $GM_+$ the Levi subgroup of $GP_+$ and by $N$ the unipotent radical of $GP_+$.

A choice of a basis $e_1, \ldots, e_n$ for $V$ over $K$ gives an identification of $V$ with $V^d$ (via $e_i\mapsto \left(e_i, e_i\right)$) and with $V_d$ (via $e_i\mapsto \left(e_i, -e_i\right)$).  The choice of a basis for $V$ also identifies $\gl_K(V)$ with $\gl_n(K)$.  With respect to the ordered basis $\left(e_1, e_1\right)\ldots, \left(e_n, e_n\right), \left(e_1, -e_1\right)\ldots, \left(e_n, -e_n\right)$ for $W$, $M$ consists of the block diagonal matrices of the form
\begin{align*}
m(h) := \left({ }^t\overline{h}^{-1}, h\right)
\end{align*}
 with $h\in \gln(K\otimes R)$, and $GM$ consists of 
the block diagonal matrices of the form 
\begin{align*}
m(h, \lambda) := \left({ }^t\overline{h}^{-1}, \lambda h\right)
\end{align*}
 with $h\in \gln(K)$ and $\lambda\in E^\times$.  Thus, the choice of basis $e_1, \ldots, e_n$ for $V$ over $K$ fixes identifications
\begin{align*}
M&\isomto \gl_K(V)\\
GM&\isomto \gl_K(V)\times E^\times.
\end{align*}
Note that these isomorphisms extend to isomorphisms 
\begin{align}
M(R)&\isomto \gl_{K\otimes_ER}\left(V\otimes_ER\right)\label{leviglvunitary}\\
GM(R)&\isomto \gl_{K\otimes_ER}(V\otimes_ER)\times R^\times\label{leviglv}
\end{align}
for each $E$-algebra $R$.

We fix a Shimura datum $\left(G, X\left(W\right)\right)$ and a corresponding Shimura variety $\Sh(W) = \Sh(U(n,n))$, according to the conditions in \cite[Section 1.2]{HLS} and \cite[Section 2.2]{EDiffOps}.  Note that the symmetric domain $X(W)$ is holomorphically isomorphic to the tube domain consisting of $[E:\IQ]$ copies of
\begin{align*}
\hn=\left\{z\in M_{n\times n}(\IC)\middle| i({ }^t\bar{z}-z)>0\right\}.  
\end{align*}
When we need to emphasize over which ring $R$ we work, we sometimes write $\Sh(R)$.  Let $\mathcal{K}_{\infty}$ be the stabilizer in $G(\IR)$ of the point $i\cdot 1_n$.  So $\prod_{\sigma\in\Sigma}\mathcal{K}_{\infty}$ is the stabilizer in $\prod_{\sigma\in\Sigma}G(\IR)$ of the point
\begin{align}\label{gimeldefn}
\mathbf{i} = \left(i\cdot 1_n\right)_{\sigma\in\Sigma} \in \prod_{\sigma\in\Sigma}\hn.
\end{align}
Note that we can identify $G_+(\IR)/\mathcal{K}_{\infty}$ with $\hn$.  Given a compact open subgroup $\mathcal{K}$ of $G(\adeles_f)$, denote by $_{\mathcal{K}}\Sh(W)$ the Shimura variety whose complex points are given by
\begin{align*}
G(\IQ)\backslash X\times G(\adeles_f)/\mathcal{K}.
\end{align*}
This Shimura variety is a moduli space for abelian varieties together with a polarization, an endomorphism, and a level structure (dependent upon the choice of $\mathcal{K}$).  Note that $_{\mathcal{K}}\Sh(W)$ consists of copies of quotients of spaces isomorphic to $\hn$.  

When we are working with some other group $H$, we write $\Sh(H)$ instead of $\Sh(W)$.

\subsubsection{Automorphic Forms on unitary groups}

Automorphic forms on unitary groups are typically discussed from any of the following three perspectives (which are equivalent over $\IC$):
\begin{enumerate}
\item{Functions on a unitary group that satisfy an automorphy condition}
\item{$\ci$- (or holomorphic) functions on a hermitian symmetric space (analogue of the upper half plane) that satisfy an automorphy condition}
\item{Sections of a certain vector bundle over a moduli space (a Shimura variety) parametrizing abelian varieties together with a polarization, endomorphism, and level structure}
\end{enumerate}
Which perspective is most natural depends upon context.  In this paper, we shall need all three perspectives.  (In \cite[Section 2]{EDiffOps}, we provided a detailed discussion of automorphic forms and the relationships between different approaches to defining them.)

The relationship between the first two approaches to automorphic forms is reviewed in \cite[p. 9]{AppToSHL} and \cite[A8]{shar}.  The relationship between the second two approaches to automorphic forms is discussed in \cite[Section 2]{EDiffOps} and is similar to the analogous relationship for modular forms given in \cite[A1.1]{ka2}.

Note that an automorphic form $f$ on $U(n, n)$ has a weight, which is a representation $\rho$ of $\gln\times \gln$.  In the special case where this representation is of the form 
\begin{align*}
\rho(a, b) = \det(a)^{k+\nu}\det(b)^{-\nu},
\end{align*}
we shall say {\it $f$ is an automorphic form of weight $(k, \nu)$.}

As explained in \cite{la, lanalgan}, for the unitary groups of signature $(n,n)$, there is a higher-dimensional analogue of the Tate curve (which we call the ``Mumford object'' in \cite[Section 4.2]{EDiffOps} and \cite[Section 2.2.11]{AppToSHL}), and so in analogue with the case for modular forms evaluated at the Tate curve, one obtains an algebraic $q$-expansion by evaluating an automorphic form at the Mumford object.  Like in the case of modular forms, the coefficients of an algebraically defined $q$-expansion of a holomorphic automorphic form $f$ of over $\IC$ agree with the (analytically defined) Fourier coefficients of $f$ \cite{lanalgan}.  Also, like in the case of modular forms, there is a $q$-expansion principle for automorphic forms on unitary groups \cite[Prop 7.1.2.15]{la}; note that the $q$-expansion principle for automorphic forms over a Shimura variety requires the evaluation of an automorphic form at one cusp of each connected component.  As explained in \cite[Section 8.4]{hida}, to apply the $q$-expansion principle, it is enough to check the cusps parametrized by points of $GM_+(\adeles_E)$.  (The author is grateful to thank Kai-Wen Lan for explaining this to her.)  We shall say ``a cusp $m\in GM_+(\adeles_E)$'' to mean ``the cusp corresponding to the point $m$.''  Note that the $q$-expansion of an automorphic form at a cusp $m(h, \lambda)$ is a sum of the form
\begin{align*}
\sum_{\beta\in L_{m(h, \lambda)}}a(\beta)q^\beta,
\end{align*}
where $L_{m(h, \lambda)}$ is a lattice in $\hern(E)$ dependent upon the choice of the cusp $m(h, \lambda)$ and $a(\beta)\in\IC$ for all $\beta$ (or, more generally, if $f$ is a $V$-valued automorphic form for some $\IC$-vector space $V$, $a(\beta)\in V$ for all $\beta$).  We sometimes also write
\begin{align*}
\sum_{\beta\in \hern(E)}a(\beta)q^\beta,
\end{align*}
when we do not need to make the cusp explicit; in this case, we know that the coefficients $a(\beta)$ are zero outside of some lattice in $\hern(E)$ (namely, the lattice corresponding to the unspecified cusp).

Note that throughout the paper, all cusps $m$ and corresponding lattices $L_m\subseteq\hern(K)$ determined by $m$ are chosen so that the elements of $L_m$ have $p$-integral coefficients.\footnote{Even without this choice for $m$ and $L_m$, which we did not make {\it a priori} in \cite{AppToSHL}, we could force the Fourier coefficients at all the non-$p$-integral elements of $\hern(K)$ to be zero, simply by our choice of a Siegel section at $p$ later in this paper.  In fact, in \cite[Section 2.2]{AppToSHL}, our choice of Siegel sections at $p$ forced the Fourier coefficients at all the non-$p$-integral elements of $\hern(K)$ to be zero.}

\section{Eisenstein series on unitary groups}\label{eseriesintrosection}
In this section, we introduce certain Eisenstein series on unitary groups of signature $(n,n)$.  These Eisenstein series are related to the ones discussed in \cite[Section 2]{AppToSHL}, \cite[Section 18]{sh}, and \cite[Section (3.2)]{kaCM}.

For $k\in\ZZ$ and $\nu = \left(\nu(\sigma)\right)_{\sigma\in\Sigma}\in \ZZ^\Sigma$, we denote by $\mathbf{N}_{k, \nu}$ the function
\begin{align*}
\mathbf{N}_{k, \nu}: K^\times&\rightarrow K^\times\\
b&\mapsto\prod_{\sigma\in\Sigma}\sigma(b)^{k+2\nu(\sigma)}\left(\sigma(b)\bar{\sigma}(b)\right)^{-\left(\nu(\sigma)\right)}.
\end{align*}
Note that for all $b\in \Oe^\times$,
\begin{align*}
\mathbf{N}_{k, \nu}(b) = \mathbf{N}_{E/\IQ}^k(b).
\end{align*}

\begin{thm}\label{proplc}
Let $R$ be an $\OK$-algebra, let $\nu = \left(\nu(\sigma)\right)\in\ZZ^\Sigma$, and let $k\geq n$ be an integer.    Let
\begin{align*}
F: \left(\OK\otimes\ZZ_p\right)\times M_{n\times n}\left(\Oe\otimes\ZZ_p\right)\rightarrow R
\end{align*}
be a locally constant function supported on $\left(\OK\otimes\ZZ_p\right)^\times\times M_{n\times n}\left(\Oe\otimes\ZZ_p\right)$ that satisfies
\begin{align}\label{equnknualakacm}
F\left(ex, \mathbf{N}_{K/E}(e^{-1})y\right) = \mathbf{N}_{k, \nu}(e)F\left(x, y\right)
\end{align}
for all $e\in \OK^\times$, $x\in \OK\otimes\ZZ_p$, and $y\in M_{n\times n}\left(\Oe \otimes\ZZ_p\right)$.  There is an automorphic form $G_{k, \nu, F}$ (on $U(n,n)$) of weight $(k, \nu)$ defined over $R$ whose $q$-expansion at a cusp $m\in GM_+(\adeles_E)$ is of the form $\sum_{0<\beta\in L_m}c(\beta)q^\beta$ (where $L_{m}$ is the lattice in $\hern(K)$ determined by $m$), with $c(\beta)$ a finite $\ZZ$-linear combination of terms of the form
\begin{align*}
F\left(a, \mathbf{N}_{K/E}(a)^{-1}\beta\right)\mathbf{N}_{k, \nu}\left(a^{-1}\det\beta\right)\mathbf{N}_{E/\IQ}\left(\det\beta\right)^{-n}
\end{align*}
(where the linear combination is a sum over a finite set of $p$-adic units $a\in K$ dependent upon $\beta$ and the choice of cusp $m\in GM$).  When $R = \IC$, these are the Fourier coefficients at $s=\frac{k}{2}$ of the $\ci$-automorphic form $G_{k, \nu, F}\left(z, s\right)$ (which is holomorphic at $s=\frac{k}{2}$) that will be defined in Lemma \ref{lemprop8}.
\end{thm}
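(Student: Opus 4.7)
The plan is to realize $G_{k,\nu,F}$ as a Siegel-type Eisenstein series on $U(n,n)$ whose local Siegel section at $p$ is built directly from $F$, and then to descend from $\IC$ to $R$ using the $q$-expansion principle. First, I would package $F$ as a Bruhat--Schwartz section of the appropriate degenerate principal series at $p$: identifying the open cell in $P\backslash G$ at $p$ with (a piece of) $M_{n\times n}(\Oe\otimes\ZZ_p)$, a function on $M_{n\times n}(\Oe\otimes\ZZ_p)$ together with a character of $(\OK\otimes\ZZ_p)^\times$ determines a locally constant Siegel section $f_p$ whose Levi transformation is governed by $\mathbf{N}_{k,\nu}$. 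The equivariance relation \eqref{equnknualakacm} on $F$ is precisely what is needed to make $f_p$ well-defined as a section of the induced representation. Away from $p$ and $\infty$ I would take the standard spherical section on $P(\adeles_{E,f})$, and at the archimedean places I would take the unique holomorphic section of weight $(k,\nu)$ attached to the point $\mathbf{i}$ of \eqref{gimeldefn}.

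With the global section $f = \prod_v f_v$ in hand, I would form the Siegel Eisenstein series $E(g,s;f)$ on $G(\adeles_E)$ and evaluate at $s = k/2$. The hypothesis $k \geq n$ places us at or above the point of absolute convergence in Shimura's range, so the series converges and defines a holomorphic automorphic form; this is the $\ci$-Eisenstein series $G_{k,\nu,F}(z,s)$ of Lemma \ref{lemprop8}. The Fourier expansion at a cusp $m = m(h,\lambda) \in GM_+(\adeles_E)$ is then obtained by the standard unfolding: writing $E(g,s;f) = \sum_{\gamma\in P(E)\backslash G(E)} f(\gamma g,s)$ and unfolding against $\e_v\circ\tr(\beta\,\cdot\,)$, each Fourier coefficient becomes a finite product of local orbital integrals. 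At the archimedean places the integral against the holomorphic section at $s=k/2$ produces the expected factor $\mathbf{N}_{E/\IQ}(\det\beta)^{k-n}$ times the exponential $e^{2\pi i\tr(\beta z)}$; at the unramified finite places away from $p$ one gets an integrality factor supported on $\beta\in L_m$; and combining these contributions together with the normalization of the induction explains the overall $\mathbf{N}_{E/\IQ}(\det\beta)^{-n}$ appearing in the statement.

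The crucial step is the local computation at $p$. Because $f_p$ is locally constant with support in $(\OK\otimes\ZZ_p)^\times\times M_{n\times n}(\Oe\otimes\ZZ_p)$, the orbital integral at $p$ collapses to a finite sum indexed by $p$-adic unit representatives $a\in K$ (depending on $\beta$ and on the choice of cusp $m$) of the values $F\bigl(a,\mathbf{N}_{K/E}(a)^{-1}\beta\bigr)\cdot\mathbf{N}_{k,\nu}\bigl(a^{-1}\det\beta\bigr)$. Combining this with the archimedean and unramified contributions yields exactly the formula stated for $c(\beta)$, and in particular shows that only totally positive $\beta\in L_m$ with $\det\beta$ a unit on the support contribute, so that the constant term vanishes.

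Finally, the Fourier coefficients so obtained take values in $R$ (since $F$ does) and are supported on the $p$-integral lattice $L_m$, with the Levi transformation law forcing them to be the $q$-expansion coefficients of a weight-$(k,\nu)$ form. Invoking the $q$-expansion principle for Shimura varieties of signature $(n,n)$ \cite[Prop.~7.1.2.15]{la} together with the reduction to cusps parametrized by $GM_+(\adeles_E)$ \cite[Section~8.4]{hida} produces the desired unique algebraic automorphic form $G_{k,\nu,F}$ over $R$. The main obstacle I expect is the $p$-adic orbital integral calculation: one must verify that it really reduces to the prescribed finite $\ZZ$-linear combination, with $a$ genuinely a $p$-adic unit in $K$, using only the support hypothesis on $F$ and the equivariance \eqref{equnknualakacm}. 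This is the step where the whole construction hinges on $F$ rather than on its ambient Bruhat--Schwartz envelope, and it runs parallel to the analogous computation carried out by Katz in \cite[Sections~3.4, 4.2]{kaCM}.
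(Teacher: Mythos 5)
Your proposal is correct and follows essentially the same route as the paper: a Siegel Eisenstein series on $U(n,n)$ with a locally constant section at $p$ encoding $F$ (the paper's Lemma \ref{pfflemma} and Corollaries \ref{anyF}--\ref{reallyanyF}), the holomorphic weight-$(k,\nu)$ section at $\infty$, Shimura's sections $f^{\mathfrak{b}}_v$ away from $p\infty$, local Fourier coefficient computations, and descent to $R$ via the $q$-expansion principle. The only steps you leave implicit that the paper makes explicit are the reduction to $\OK$-valued $F$, the decomposition of $F$ into pieces $\tilde F_\chi$ attached to global Hecke characters $\chi$ of infinity type \eqref{infinitytypeknu} and conductor dividing $p^\infty$ (needed so that the local data at $p$, $\infty$, and the remaining places all live in one induced representation $I(\chi,s)$), and the normalization by the constant $D(n,K,\mathfrak{b},p,k)$ that strips off the archimedean $\Gamma$- and $\pi$-factors and the $L$-values so that the coefficients become genuine $\ZZ$-linear combinations of the stated terms.
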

(Above, the elements of $\left(\Oe\otimes\ZZ_p\right)^\times$ in $M_{n\times n}\left(\Oe\otimes\ZZ_p\right)$ are viewed as homomorphisms, i.e. multiplication by an element of $\left(\Oe\otimes\ZZ_p\right)^\times$, so as diagonal matrices in $M_{n\times n}\left(\Oe\otimes\ZZ_p\right)$.  Also, note that when $\det\beta=0$, the coefficient of $q^\beta$ is $0$, so we can restrict the discussion to $F$ with support in $\left(\OK\otimes\ZZ_p\right)^\times\times \gln\left(\Oe\otimes\ZZ_p\right)$.)

\begin{proof}
By an argument similar to Katz's argument at the beginning of the proof of \cite[Theorem (3.2.3)]{kaCM}, every locally constant $R$-valued function $F$ supported on $\left(\OK\otimes\ZZ_p\right)^\times\times M_{n\times n}\left(\Oe\otimes\ZZ_p\right)$ that satisfies Equation \eqref{equnknualakacm} is an $R$-linear combination of $\OK$-valued functions $F$ supported on $\left(\OK\otimes\ZZ_p\right)^\times\times M_{n\times n}\left(\Oe\otimes\ZZ_p\right)$ that satisfy Equation \eqref{equnknualakacm}.  
So it is enough to prove the theorem for $\OK$-valued functions $F$.

Now, if we can construct an automorphic form satisfying the conditions of the theorem over $R=\IC$, then by the $q$-expansion principle \cite[Prop 7.1.2.15]{la}, the case over $R$ will follow for any $\OK$-subalgebra $R$ (in particular, for $R=\OK$) of $\IC$.  By \cite{lanalgan}, it sufficient to show that there is a $\IC$-valued $\ci$-automorphic form $G_{k, \nu, F}$ of weight $(k, \nu)$ holomorphic at $s=\frac{k}{2}$, whose Fourier coefficients (at $s=\frac{k}{2}$) are as in the statement of the theorem.  We will spend the remainder of this section (i.e. all of Section \ref{siegelchoice}) constructing such an automorphic form.
\end{proof}

\subsection{Construction of a $\ci$-automorphic form over $\IC$ whose Fourier coefficients meet the conditions of Theorem \ref{proplc}}\label{siegelchoice}
In this section, we construct the $\ci$-automorphic form $G_{k, \nu, F}$ necessary to complete the proof of Theorem \ref{proplc}.

Let $\mathfrak {m}$ be an ideal that divides $p^\infty$.  Let $\chi$ be a unitary Hecke character of type $A_0$
\begin{align*}
\chi: \adeles_K^\times\rightarrow \IC^\times
\end{align*}
of conductor $\mathfrak{m}$, i.e. 
\begin{align*}
\chi_v(a) = 1
\end{align*}
for all finite primes $v$ in $K$ and all $a\in K_v^\times$ such that
\begin{align*}
a\in 1+\mathfrak{m}_v{\OK}_v.
\end{align*}
Let $\nu(\sigma)$ and $k(\sigma)$, $\sigma\in\Sigma$, denote integers such that the infinity type of $\chi$ is
\begin{align}\label{infinitytypeknu}
\prod_{\sigma\in\Sigma}\sigma^{-k(\sigma)-2\nu(\sigma)}\left(\sigma\cdot\bar{\sigma}\right)^{\frac{k(\sigma)}{2}+\nu(\sigma)}.
\end{align}

For any $s\in \IC$, we view $\chi\cdot\left|\cdot\right|_K^{-s}\otimes |\cdot|_E^{-ns}$ as a character of the parabolic subgroup $GP_{+}(\adeles_E) = GM_{+}(\adeles_E)N(\adeles_E)\subseteq G_+\left(\adeles_E\right)$ via the composition of maps
\begin{tiny}
\begin{align*}
GP\left(\adeles_E\right)\xrightarrow{\mod N\left(\adeles_E\right)} GM\left(\adeles_E\right)&\xrightarrow{\mbox{map in \eqref{leviglv}}} \gl_{\adeles_K}(V\otimes_E\adeles_E)\times \gl_1\left(\adeles_E\right)\\
&\gl_{\adeles_K}(V\otimes_E\adeles_E)\times \gl_1\left(\adeles_E\right)
\xrightarrow{(h, \lambda)\mapsto|\lambda|_E^{-ns}\cdot\chi(\det h)\left|\det h\right|_K^{-s}}  \IC^\times.
\end{align*}
\end{tiny}
Consider the induced representation
\begin{align}\label{inducedrep}
I(\chi, s) &= \Ind_{GP_+(\adeles_E)}^{G_+(\adeles_E)} (\chi\cdot\left|\cdot\right|_K^{-s}\otimes |\omega(\cdot)|_K^{-ns/2})\nonumber\\&\cong \otimes_v\Ind_{GP_+(E_v)}^{G_+(E_v)}\left(\chi_v\cdot\left|\cdot\right|_v^{-2s}\otimes |\omega(\cdot)|_v^{-ns}\right),
\end{align}where the product is over all places of $E$.

Given a section $f\in I(\chi, s)$, the Siegel Eisenstein series associated to $f$ is the $\IC$-valued function of $G$ defined by
\begin{align*}
E_f(g) = \sum_{\gamma\in GP_+(E)\backslash G_+(E)}f(\gamma g)
\end{align*}
This function converges for $\Re(s)>0$ and can be continued meromorphically to the entire complex plane.
\begin{rmk}
As in \cite{AppToSHL}, if we were working with normalized induction, then the function would converge for $\Re(s)>\frac{n}{2}$, but we have absorbed the exponent $\frac{n}{2}$ into the exponent $s$. (Our choice not to include the modulus character at this point is equivalent to shifting the plane on which the function converges by $\frac{n}{2}$.)
\end{rmk}
All the poles of $E_f$ are simple and there are at most finitely many of them.  Details about the poles are given in \cite{tan}.  

As we noted in \cite[Section 2.2.4]{AppToSHL}, if the Siegel section $f$ factors as $f=\otimes_vf_v$, then $E_f$ has a Fourier expansion such that for all $h\in \gln(K)$ and $m\in\hern(K)$, 
\begin{align*}
E_f\left(\begin{pmatrix}1 & m\\ 0& 1\end{pmatrix}\begin{pmatrix}{ }^t\bar{h}^{-1} & 0\\ 0 & h\end{pmatrix}\right) = \sum_{\beta\in\hern(K)}c(\beta, h; f)\e_{\adeles_E}\left(\tr\left(\beta m\right)\right),
\end{align*}
with $c(\beta, h; f)$ a complex number dependent only on the choice of section $f$, the hermitian matrix $\beta\in \hern(K)$, $h_v$ for finite places $v$, and $\left(h\cdot  { }^t\bar{h}\right)_v$ for archimedean places $v$ of $E$.

By \cite[Sections 18.9, 18.10]{sh}, the Fourier coefficients of the Siegel sections $f=\otimes_vf_v$ that we will choose below are products of local Fourier coefficients determined by the local sections $f_v$.  More precisely, for each $\beta\in\hern(K)$,
\begin{align*}
c(\beta, h; f) &= C(n, K)\prod_vc_v(\beta, h; f),
\end{align*}
where
\begin{align}
c_v(\beta, h; f) &=\label{cvdef}\\
\int_{\hern(K\otimes E_v)}& f_v\left(\begin{pmatrix}0& -1\\ 1& 0\end{pmatrix}\begin{pmatrix}1 & m_v\\ 0& 1\end{pmatrix}\begin{pmatrix}{ }^t\bar{h_v}^{-1} & 0\\ 0 & h_v\end{pmatrix}\right)\e_v(-\tr(\beta_v m_v))dm_v,\nonumber\\
C(n, K)  &= 2^{n(n-1)[E:\IQ]/2}\left|D_E\right|^{-n/2}\left|D_K\right|^{-n(n-1)/4}\label{CnK},
\end{align}
$D_E$ and $D_K$ are the discriminants of $K$ and $E$ respectively, $\beta_v = \sigma_v(\beta)$ for each place $v$ of $E$, and $d_v$ denotes the Haar measure on $\hern(K_v)$ such that:
\begin{align}
&\int_{\hern\left(\OK\otimes_E E_v\right)} d_vx = 1, \mbox{ for each finite place $v$ of E}\nonumber\\
d_vx&: = \left|\bigwedge_{j=1}^{n}dx_{jj}\bigwedge_{j<k}\left(2^{-1}dx_{jk}\wedge d\bar{x}_{jk}\right)\right|, \mbox{ for each archimedean place $v$ of $E$}.\label{matrixnote}
\end{align}
(In Equation \eqref{matrixnote}, $x$ denotes the matrix whose $ij$-th entry is $x_{ij}$.)

Below, we recall \cite[Lemma 19]{AppToSHL}, which explains how the Fourier coefficients $c(\beta, h; f)$ transform when we change the point $h$.  For each $h\in \gln(\adeles_K)$ and $\lambda\in\adeles_E^\times$, let $m(h, \lambda)$ denote the matrix $\begin{pmatrix}{ }^t\bar{h}^{-1} & 0\\ 0 & \lambda h\end{pmatrix}$.  Generalizing Equation \eqref{cvdef}, we define
\begin{align*}
c_v(\beta, m(h,\lambda); f) &=
\int_{\hern(K\otimes E_v)}& f_v\left(\begin{pmatrix}0& -1\\ 1& 0\end{pmatrix}\begin{pmatrix}1 & m_v\\ 0& 1\end{pmatrix}m(h, \lambda)\right)\e_v(-\tr(\beta_v m_v))dm_v.
\end{align*}
We also define $c(\beta, m(h, \lambda); f) = C(n, K)\prod_vc_v(\beta, m(h, \lambda); f).$

\begin{lem}[Lemma 19 in \cite{AppToSHL}]\label{levieffect}
For each $h\in \gln(\adeles_K)$, $\lambda\in\adeles_E^\times$, and $\beta\in\hern(K)$,
\begin{align}\label{fcoeffsat1}
c&\left(\beta, \begin{pmatrix}{ }^t\bar{h}^{-1}& 0\\ 0& \lambda h\end{pmatrix}; f\right)\nonumber\\
& = \chi(\det\overline{\left(\lambda h\right)}^{-1})\left|\det\left(\overline{\left(\lambda h\right)}^{-1}\cdot\left(\lambda h\right)^{-1}\right)\right|_E^{n-s}|\lambda|_E^{-ns} c(\lambda^{-1}h^{-1}\beta{ }^t\bar{h}^{-1} , 1_n; f).
\end{align}
\end{lem}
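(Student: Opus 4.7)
The plan is to reduce the identity to a purely local statement, verify a Bruhat-style matrix factorization, then invoke the induced-representation transformation law of the Siegel section, and finally carry out a change of variables on the space of Hermitian matrices. Since the Siegel section factors as $f=\otimes_v f_v$ and the Fourier coefficient factors correspondingly as $c(\beta,\cdot;f)=C(n,K)\prod_v c_v(\beta,\cdot;f_v)$, it suffices to prove the analogous identity at each place $v$ of $E$ and then multiply; the global characters $\chi$ and $|\cdot|_K$, $|\cdot|_E$ appearing in \eqref{fcoeffsat1} arise as the products of their local counterparts.

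The core local input is the block-matrix identity
$$w\,N_m\,m(h,\lambda)\;=\;m\bigl(\lambda^{-1}{}^t\bar h^{-1},\,\lambda\bigr)\,w\,N_{\lambda\,{}^t\bar h\,m\,h},$$
where $w=\bigl(\begin{smallmatrix}0&-1\\1&0\end{smallmatrix}\bigr)$ and $N_m=\bigl(\begin{smallmatrix}1&m\\0&1\end{smallmatrix}\bigr)$. One checks this by direct multiplication of $2\times 2$ block matrices, using that $\bar\lambda=\lambda$ for $\lambda\in E^\times$ (recall $E$ is totally real). Substituting into the integral defining $c_v(\beta,m(h,\lambda);f)$ and applying the defining property $f_v(m(h'',\lambda'')g)=|\lambda''|_v^{-ns}\chi_v(\det h'')|\det h''|_v^{-s}f_v(g)$ of a section of $I(\chi,s)$, one pulls the Levi factor $m(\lambda^{-1}{}^t\bar h^{-1},\lambda)$ outside. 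Straightforward manipulations with $\chi_v$, with the local norm $|a|_v=|a\bar a|_v^{1/2}$, and with the relation $|a|_K=|a\bar a|_E$ rewrite the resulting character factor into the product $\chi_v(\det\overline{(\lambda h)}^{-1})\,|\det(\overline{(\lambda h)}^{-1}(\lambda h)^{-1})|_{E,v}^{n-s}\,|\lambda|_{E,v}^{-ns}$, up to the factors still to be produced by the change of variables.

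What remains inside the integral is $f_v(w N_{\lambda{}^t\bar h\,m\,h})\,\mathbf{e}_v(-\tr(\beta m))$. One then performs the substitution $m':=\lambda\,{}^t\bar h\,m\,h$; this is a sesquilinear self-map of $\hern(K\otimes_E E_v)$ (since the map $m\mapsto g^*mg$ with $g={}^t\bar h$ preserves Hermitian matrices and $\lambda$ is totally real). Cyclic invariance of the trace transforms the additive character to $\mathbf{e}_v(-\tr((\lambda^{-1}{}^t\bar h^{-1}\beta h^{-1})\,m'))$, and the Jacobian of this substitution on $\hern(K\otimes_E E_v)$ contributes exactly the powers of $|\det h|_v$ and $|\lambda|_v$ needed to complete the exponents $n-s$ and $-ns$ appearing in the statement. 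The surviving integral is then, by definition, $c_v(\lambda^{-1}{}^t\bar h^{-1}\beta h^{-1},\,1_n;\,f)$.

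The main obstacle in this plan is the careful accounting of the Jacobian on the space of Hermitian matrices in the paper's specific normalization, where $|a|_v=|a\bar a|_v^{1/2}$ and $|a|_K=|a\bar a|_E$ differ from the ``naive'' product formula by factors of two. These factors must combine cleanly with the Levi character computed in the previous step so that the exponent $n-s$ (rather than $s/2$ or $2(n-s)$) appears in the final formula. Taking the product of the resulting local identities over all $v$ and bundling in the global constant $C(n,K)$ yields \eqref{fcoeffsat1}; alternatively, the entire computation may simply be referenced to its earlier incarnation in \cite[Lemma~19]{AppToSHL}.
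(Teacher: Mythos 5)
Your proposal follows essentially the same route as the paper's proof: the identity $\eta N_m\, m(h,\lambda)=m(\lambda^{-1}\,{}^t\bar h^{-1},\lambda)\,\eta\, N_{\lambda\,{}^t\bar h m h}$ is exactly the matrix factorization the paper verifies, after which both arguments apply the left-transformation law of the induced section and finish with the change of variables $m\mapsto \lambda\,{}^t\bar h m h$ on $\hern(K\otimes_E E_v)$, whose Jacobian $\left|\det\left(\lambda\,{}^t\bar h\cdot h\right)\right|_v^{n}$ supplies the remaining powers. The paper likewise leaves the final normalization bookkeeping to the reader, so your level of detail matches its proof.
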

\begin{proof}
Let $\eta = \begin{pmatrix}0 & -1_n\\ 1_n & 0\end{pmatrix}.$
Observe that for any $n\times n$ matrix $m$,
\begin{align*}
\eta \cdot m(h, \lambda)\cdot \eta^{-1} & = m(\lambda^{-1}{ }^t\bar{h}^{-1}, \lambda)\\
m(h, \lambda)^{-1}\cdot \begin{pmatrix}1&m\\ 0 &1\end{pmatrix}\cdot m(h, \lambda) &= \begin{pmatrix}1 & \lambda { }^t\bar{h} m h\\ 0 & 1\end{pmatrix}.
\end{align*}
Therefore,
\begin{align*}
\eta \cdot \begin{pmatrix}1 & m \\ 0 & 1\end{pmatrix}\cdot m(h, \lambda) & = \left(\eta \cdot m(h, \lambda)\cdot \eta^{-1}\right)\eta \left(m(h, \lambda)^{-1}\begin{pmatrix}1 & m\\ 0 & 1\end{pmatrix} m(h, \lambda)\right)\\
 & = m(\lambda^{-1}{ }^t\bar{h}^{-1}, \lambda) \eta \begin{pmatrix}1 & \lambda{ }^t\bar{h} m§ h\\ 0 & 1\end{pmatrix}.
\end{align*}
So for any place $v$ of $E$ and section $f_v\in \Ind_{GP(E_v)}^{G_+(E_v)}(\chi, s)$,
\begin{align}\label{leviunimp2}
f_v&\left(\eta\begin{pmatrix}1 & m \\ 0 & 1\end{pmatrix}m(h_v, \lambda)\right)\\
& = \chi_v\left(\det\overline{\lambda_vh_v}^{-1}\right)\left|\det\overline{\lambda_vh_v}^{-1}\right|^{-2s}_v|\lambda|_v^{-ns}f_v\left(\eta \begin{pmatrix}1 & \lambda { }^t\bar{h}_v m h_v\\ 0 & 1\end{pmatrix}\right). \end{align}
The lemma now follows from Equation \eqref{leviunimp2} and the fact that the Haar measure $d_v$ satisfies $d_v(\lambda h_vx{ }^t\bar{h_v}) = \left|\det\left(\lambda_v{ }^t\bar{h_v}\cdot h_v\right)\right|_v^{n}d_v(x)$ for each place $v$ of $E$. 
\end{proof}

So
\begin{align*}
c\left(\beta, \begin{pmatrix}\lambda^{-1}{ }^t\bar{h}^{-1}& 0\\ 0& h\end{pmatrix}; f\right) 
& = \chi\left(\lambda^n\right)\left|\lambda^{2n}\right|_E^{n-s}|\lambda|_E^{2ns}\left(\beta, \begin{pmatrix}{ }^t\bar{h}^{-1}& 0\\ 0& \lambda h\end{pmatrix}; f\right)\\
&=\left|\lambda^{2n^2}\right|_E\chi\left(\lambda^n\right)c\left(\beta, \begin{pmatrix}{ }^t\bar{h}^{-1}& 0\\ 0& \lambda h\end{pmatrix}; f\right).
\end{align*}

Below, we choose more specific Siegel sections $f = \otimes_vf_v$ and compute the corresponding Fourier coefficients. 
\subsubsection{The Siegel section at $\infty$}

In this section, we define a section $f_\infty^{k, \nu}=f_{\infty}^{k, \nu}\left(\bullet; i\cdot 1_n, \chi, s\right)\in\otimes_{v\divides\infty}\Ind_{GP_+(E_v)}^{G_+(E_v)}\left(\chi_v\cdot\left|\cdot\right|_v^{-2s}\otimes |\omega(\cdot)|_E^{-ns}\right)$.

For each $\alpha = \prod_{v\divides\infty}\alpha_v \in \prod_{v\divides\infty}G(E_v)$, we write $\alpha_v$ in the form $\begin{pmatrix}a_{v} & b_{v}\\ c_{v} & d_{v}\end{pmatrix}$ with $a_v, b_v, c_v,$ and $d_v$ $n\times n$ matrices.  Each element $\alpha\in G(E_v)$ acts on $z = \prod_{v\divides\infty}z_v\in\prod_{v\divides\infty}\hn$ by
\begin{align*}
\alpha_v\left(z_v\right) &= \left(a_vz_v+b_v\right)\left(c_vz_v+d_v\right)^{-1}\\
\alpha(z) & = \prod_{v\divides\infty}\alpha_v\left(z_v\right).
\end{align*}
Let
\begin{align*}
\lambda_{\alpha_v}\left(z_v\right)&= \lambda\left(\alpha_v, z_v\right) = \overline{c_v}\cdot{ }^tz_v+\overline{d_v}\\
\lambda_{\alpha}(z)& = \lambda(\alpha, z) = \prod_{v\divides\infty}\lambda_{\alpha_v}\left(z_v\right)\\
\mu_{\alpha_v}\left(z_v\right)&= \mu\left(\alpha_v, z_v\right) = c_v\cdot z_v+d_v\\
\mu_{\alpha}(z)& = \mu(\alpha, z) = \prod_{v\divides\infty}\mu_{\alpha_v}\left(z_v\right).
\end{align*}
(These are the canonical automorphy factors.  Properties of them are discussed in, for example, \cite[Section 3.3]{shar}.)  
We write
\begin{align*}
j_{\alpha_v}\left(z_v\right) & = j(\alpha_v, z_v) = \det\mu_{\alpha_v}\left(z_v\right)\\
j_{\alpha}(z) &= j(\alpha, z) = \prod_{v\divides\infty}j_{\alpha_v}\left(z_v\right).
\end{align*}
Note that
\begin{align}
\det\left(\lambda_{\alpha_v}\left(z_v\right)\right) &= \det\left(\overline{\alpha_v}\right)\omega\left(\alpha_v\right)^{-n}j_{\alpha_v}\left(z_v\right)\label{waystowritej1}\\
&= \det\left(\alpha_v\right)^{-1}\omega\left(\alpha_v\right)^nj_{\alpha_v}\left(z_v\right)\label{waystowritej2}.
\end{align}
So
\begin{align*}
\left|\det\left(\lambda_{\alpha_v}\left(z_v\right)\right)\right| = \left|j_{\alpha_v}\left(z_v\right)\right|.
\end{align*}
Consistent with the notation in \cite[Equation (10.4.3)]{sh}, we define
\begin{align*}
j_{\alpha}^{k, \nu}(z) &:= j_{\alpha}(z)^{k+\nu}\det\left(\lambda_{\alpha}(z)\right)^{-\nu}.
\end{align*}
By Equations \eqref{waystowritej1} and \eqref{waystowritej2}, we see that
\begin{align*}
j_{\alpha}^{k, \nu}(z)& = \left(\det\left(\overline{\alpha}\right)\omega(\alpha)^{-n}\right)^{-\nu}j_{\alpha}(z)^{k}\\
& = \left(\det\left(\alpha\right)^{-1}\omega(\alpha)^{n}\right)^{-\nu}j_{\alpha}(z)^{k}.
\end{align*}

Note that if $\beta= \prod_{v\divides\infty}\beta_v$ is also an element of $\prod_{v\divides\infty}G(E_v)$, then
\begin{align}
\lambda\left(\beta_v\alpha_v, z_v\right)& = \lambda\left(\beta_v, \alpha_vz_v\right)\lambda\left(\alpha_v, z_v\right)\label{bamult1}\\
\mu\left(\beta_v\alpha_v, z_v\right)& = \mu\left(\beta_v, \alpha_vz_v\right)\mu\left(\alpha_v, z_v\right).\label{bamult2}
\end{align}

Consistent with the notation in \cite[Section 3]{shar}, we define functions $\eta$ and $\delta$ on $\hn$ by
\begin{align*}
\eta(z) &= i\left({ }^t\bar{z}-z\right)\\
\delta(z) &= \det\left(\frac{1}{2}\eta(z)\right)
\end{align*}
for each $z\in\hn$.  So
\begin{align*}
\eta\left(i\cdot 1_n\right) &= 2\cdot 1_n\\
\delta\left(i\cdot 1_n\right) & = 1.
\end{align*}
We also write $\eta$ and $\delta$ to denote the functions $\prod_{\sigma\in\Sigma}\eta$ and $\prod_{\sigma\in\Sigma}\delta$, respectively, on $\prod_{\sigma\in\Sigma}\hn$.  So $\delta(\mathbf{i}) = 1$.  Also, note that
\begin{align*}
\delta\left(\alpha z\right) &= \omega(\alpha)^n\left|j_{\alpha}(z)\right|^{-2}\delta(z)\\
& = \omega(\alpha)^n\left|j_{\alpha}(z)\det\left(\lambda_{\alpha}(z)\right)\right|^{-1}\delta(z).
\end{align*}
Continuing to use the notation in \cite[Sections 3 and 5]{shar},
given $(k, \nu) = \prod_{v\divides\infty}\left(k_v, \nu_v\right)\in \left(\ZZ\times\ZZ\right)^\Sigma$, we define functions $f||_{k, \nu}$ and $f|_{k, \nu}$ on $\prod_{\sigma\in\Sigma}\hn$ by
\begin{align*}
\left(f||_{k, \nu}\alpha\right)(z) &= j_{\alpha}^{k, \nu}(z)^{-1}f(\alpha z)\\
f|_{k, \nu}\alpha& = f||_{k, \nu}\left(\omega(\alpha)^{-\frac{1}{2}}\alpha\right)
\end{align*}
for each $\IC$-valued function $f$ on $\hn$, point $z\in \hn$, and element $\alpha\in G$.  Note that $\omega(\alpha)^{-\frac{1}{2}}\alpha\in U(\eta_n)$, and if $\omega(\alpha_v) = 1$ for all $v\in\Sigma$, then
\begin{align*}
f|_{k, \nu}\alpha & = f||_{k, \nu}\alpha.
\end{align*}
More generally, for each function $f$ on $\prod_{\sigma\in\Sigma}\hn$ with values in some representation $(V, \rho)$ of $\prod_{\sigma\in\Sigma}\gln(\IC)\times\gln(\IC)$, we define functions $f||_{\rho}$ and $f|_{\rho}$ on $\hn$ by
\begin{align*}
\left(f||_{\rho}\alpha\right)(z) &= \rho\left(\mu_\alpha(z), \lambda_\alpha(z)\right)^{-1}f(\alpha z)\\
f|_{\rho}\alpha& = f||_{\rho}\left(\omega(\alpha)^{-\frac{1}{2}}\alpha\right).
\end{align*}
Note that we also use the notation $f||$ and $f|$ when we are working with just one copy of $\hn$, rather than $[E:\IQ]$ copies of $\hn$ at once.

We define 
\begin{align*}
f_{\infty}^{k, \nu} = \otimes f_v^{k, \nu}\in\left(\bullet; i\cdot 1_n, \chi, s\right)\in\otimes_{v\divides\infty}\Ind_{GP_+(E_v)}^{G_+(E_v)}\left(\chi_v\cdot\left|\cdot\right|_v^{-2s}\otimes |\omega(\cdot)|_E^{-ns}\right)
\end{align*}
by
\begin{align*}
f_{\infty}^{k,\nu}\left(\alpha; i\cdot 1_n, \chi, s\right) &= \left(\delta^{s-\frac{k}{2}}|_{k, \nu}\alpha\right)\left(i\cdot 1_n\right)\\
&=j_{\omega(\alpha)^{-1/2}\alpha}^{k, \nu}\left(i\cdot 1_n\right)^{-1}\left|j_{\omega(\alpha)^{-1/2}\alpha}\left(i\cdot 1_n\right)^{-2}\omega\left(\omega(\alpha)^{-1/2}\alpha\right)^{n}\right|^{s-\frac{k\left(\sigma_v\right)}{2}}\\
& = j_{\omega(\alpha)^{-1/2}\alpha}^{k, \nu}\left(i\cdot 1_n\right)^{-1}\left|j_{\omega(\alpha)^{-1/2}\alpha}\left(i\cdot 1_n\right)^{-2}\right|^{s-\frac{k}{2}}.
\end{align*}

Given $\alpha\in G$, we also define a function $f_{\infty}^{k, \nu}(\alpha; \bullet, \chi, s)$ on $\hn$ by
\begin{align*}
f_{\infty}^{k, \nu}(\alpha; z, \chi, s) & = \left(\delta^{s-\frac{k}{2}}|_{k, \nu}\alpha\right)\left(z\right)\\
&= j_{\omega(\alpha)^{-1/2}\alpha}^{k, \nu}(z)^{-1}\left|j_{\omega(\alpha)^{-1/2}\alpha}(z)^{-2}\right|^{s-\frac{k}{2}}\delta(z)^{s-\frac{k}{2}}.
\end{align*}
By Equations \eqref{bamult1} and \eqref{bamult2}, we see that if $g\in G$ is such that
\begin{align*}
g\left(\bf{i}\right) = z,
\end{align*}
then for each $\alpha\in G$,
\begin{align*}
f_{\infty}^{k, \nu}\left(\alpha g; i\cdot 1_n, \chi, s\right) &= f_{\infty}^{k, \nu}(\alpha; z, \chi, s)f_{\infty}^{k, \nu}\left(g; i\cdot 1_n, \chi, s\right)\delta(z)^{\frac{k}{2}-s}.
\end{align*}

For $k\in\ZZ$ and $\nu = \left(\nu_v\right)_{v\in\Sigma}\in\ZZ^\Sigma$, $f_{\infty}^{k, \nu}\left(\alpha; \bullet, \chi, s\right)$ is a holomorphic function on $\hn$ at $s=\frac{k}{2}$.

\subsubsection{The Fourier coefficients at archimedean places of $E$}\label{sectionsinfty}

When there is an integer $k$ such that 
\begin{align*}
s = \frac{k}{2} = \frac{k(\sigma)}{2}  \mbox{ for all $\sigma\in\Sigma$}
\end{align*}
(i.e. when $f_{\infty}^{k, \nu}\left(\alpha; z, \chi, s\right)$ is a holomorphic function of $z\in \hn$), \cite[Equation (7.12)]{sheseries} describes the archimedean Fourier coefficients precisely:
\begin{small}
\begin{align}
c_v&\left(\beta, 1_n; f_v^{k, \nu}\left(\bullet; i1_n, \chi, \frac{k}{2}\right)\right)\nonumber \\
 &= 2^{(1-n)n}i^{-nk}(2\pi)^{nk}\left(\pi^{n(n-1)/2}\prod_{t=0}^{n-1}\Gamma(k-t)\right)^{-1}\sigma_v(\det\beta)^{k-n}\e\left(i\tr(\sigma_v(\beta))\right)\label{coeffinfty},
\end{align}
\end{small}
for each archimedean place $v$ of $E$.  Observe that when $k\geq n$, 
\begin{align*}
\prod_{v\divides \infty}c_v\left(\beta, h; f_v^{k, \nu}\left(\bullet; i1_n, \chi, \frac{k}{2}\right)\right) = 0,
\end{align*}
unless $\det(\beta)\neq 0$ and $\det(h)\neq 0$, i.e. unless $\beta$ is of rank $n$.  Also, note that in our situation, $\beta$ will be in $\hern(K)$, so $\prod_{v\in \Sigma}\e\left(i\tr(\sigma_v(\beta))\right) = \e\left(i b\right)$ for some $b\in \IQ$, so $\prod_{v\in \Sigma}\e\left(i\tr(\sigma_v(\beta))\right) = \e\left(i b\right)$ is a root of unity.

\subsubsection{Siegel Sections at $p$}\label{sectionatp}

We work with Siegel sections at $p$ that are similar to the ones in \cite[Section 2.2.8]{AppToSHL}.  (To account for a similitude factor, we multiply the Siegel sections from \cite[Section 2.2.8]{AppToSHL} by $|\omega(g)|_p^{-ns}$.)

\begin{lem}[Lemma 10, \cite{AppToSHL}]\label{pfflemma}
Let $\Gamma$ be a compact open subset of $\prod_{v\in\Sigma}\gln({\Oe}_v)$, and let $\tilde{F}$ be a locally constant Schwartz function
\begin{align*}
\tilde{F}: \prod_{v\in\Sigma}\left(\Hom_{K_v}(V_v, V_{d, v})\oplus\Hom_{K_v}(V_v, V^d_v)\right)&\rightarrow R\\
(X_1, X_2)&\mapsto \tilde{F}(X_1, X_2)
\end{align*}
(with $R$ a subring of $\IC$) whose support in the first variable is $\Gamma$ and such that
\begin{align}\label{chi1chi2chi}
\tilde{F}\left(X, { }^tX^{-1}Y\right) = \prod_{v\in \Sigma}\chi_v\left(\det(X)\right)\tilde{F}(1, Y)
\end{align}
for all $X$ in $\Gamma$ and $Y$ in $\prod_{v\in\Sigma}M_{n\times n}(E_v)$.\footnote{The version of the righthand side of Equation \eqref{chi1chi2chi} appearing in \cite[Lemma 10]{AppToSHL} reads ``$\chi_1\chi_2^{-1}\left(\det(X)\right)F(1, Y)$.''  The characters denoted $\chi_1$ and $\chi_2$ in \cite{AppToSHL} have the property that $\chi_1\chi_2^{-1}(a) = \prod_{v\in\Sigma}\chi_v(a)$ for all $a\in \prod_{v\in\Sigma}{\Oe}_v$.  The function denoted by $\tilde{F}$ in the current paper is denoted by $F$ in \cite{AppToSHL}.}  There is a Siegel section $f^{P\tilde{F}(-X, Y)}$ at $p$ whose Fourier coefficient at $\beta\in M_{n\times n}(E_v)$
is
\begin{align*}
c(\beta, 1; f^{P\tilde{F}(-X, Y)}) = \mathrm{volume}(\Gamma)\cdot \tilde{F}\left(1, { }^t\beta\right).
\end{align*}
\end{lem}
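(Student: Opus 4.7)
The plan is to reprove Lemma 10 of \cite{AppToSHL} with the minor modification of inserting the similitude factor $|\nu(\cdot)|_p^{-ns}$, so that the resulting Siegel section lies in $\Ind_{GP_+(E_p)}^{G_+(E_p)}(\chi\cdot |\cdot|_p^{-2s}\otimes |\nu(\cdot)|_p^{-ns})$. I would carry out the construction in three steps: define the section as a partial Fourier transform of $\tilde F$, verify the induction property, and compute the Fourier coefficient.

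First I would introduce $P\tilde F$, the partial Fourier transform of $\tilde F$ in the second variable taken with respect to $\e_p$ and the trace pairing, normalized so that the Haar measure on $\prod_{v\in\Sigma}\Hom_{K_v}(V_v, V^d_v)$ is self-dual. I would then define $f^{P\tilde F(-X,Y)}$ by the standard Godement--Shimura recipe, which evaluates $P\tilde F$ on the two block-row pieces of $\eta g$ extracted using the basis of Section \ref{backgroundsection}, multiplied by $|\nu(g)|_p^{-ns}$. Hypothesis \eqref{chi1chi2chi} is precisely the condition needed for left-$GP_+(E_p)$-equivariance by $\chi\cdot |\cdot|_p^{-2s}\otimes |\nu(\cdot)|_p^{-ns}$; the verification is a change-of-variables calculation that uses multiplicativity of $\chi$ and of $|\det|_p$ together with the formula for how the Levi of $GP_+$ acts on the block-row decomposition of $\eta g$.

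Second, I would compute the Fourier coefficient at $\beta$ via the defining integral
\[
c_p(\beta, 1_n; f^{P\tilde F(-X,Y)}) = \int_{\hern(K\otimes E_p)} f^{P\tilde F(-X,Y)}\!\left(\eta\begin{pmatrix}1 & m \\ 0 & 1\end{pmatrix}\right)\e_p(-\tr(\beta m))\, dm.
\]
On this coset the similitude is trivial, so the similitude factor disappears and the integrand reduces to $P\tilde F$ evaluated at a pair determined by $m$. Integration against $\e_p(-\tr(\beta m))$ inverts the partial Fourier transform in the second variable, yielding a value of $\tilde F$ at $(X_0, {}^t\beta)$ for some $X_0$; one application of \eqref{chi1chi2chi} then reduces this to $\tilde F(1, {}^t\beta)$. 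The hypothesis that $\tilde F$ is compactly supported in the first variable by $\Gamma$ contributes the remaining factor $\mathrm{vol}(\Gamma)$.

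The main obstacle is bookkeeping of normalizations: the self-dual Haar measure on $\prod_{v\in\Sigma}\Hom_{K_v}(V_v, V^d_v)$, the sign conventions in $\e_p$, the conventions for $\bar{\phantom{x}}$ versus ${}^t$ arising from the Hermitian pairing, and the placement of the similitude factor must all be matched with the conventions of Section \ref{conventionssection} so that the coefficient reads precisely $\mathrm{vol}(\Gamma)\cdot \tilde F(1, {}^t\beta)$ with no stray constants. Once those normalizations are pinned down, the argument is the $p$-adic Godement--Shimura construction with similitude correction, and reduces modulo that correction to \cite[Lemma 10]{AppToSHL}.
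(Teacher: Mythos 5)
The paper does not actually reprove this lemma---it is imported (up to notation and the similitude twist) from \cite[Lemma 10]{AppToSHL}---and your outline is the same partial-Fourier-transform construction used there: define the section on the big cell via $P\tilde F$, use \eqref{chi1chi2chi} for the $GP_+$-equivariance, and recover $\tilde F(1,{}^t\beta)$ by Fourier inversion, with the correct observation that $|\nu(g)|_p^{-ns}$ is trivial on $\eta N(E_p)$ and so does not disturb the coefficient. Do note, however, that what you have written is a plan rather than a proof: the equivariance check and the inversion computation that yields exactly $\mathrm{volume}(\Gamma)\cdot\tilde F\left(1,{}^t\beta\right)$ with no stray constants are precisely the ``normalization bookkeeping'' you defer, so nothing in the proposal independently certifies the constant.
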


Note that $PF$ stands for ``partial Fourier transform.''  We use that notation to be consistent with the notation in \cite[Section 2.2.8]{AppToSHL} (which was, in turn chosen to be consistent with the notation of \cite[Section 3.1]{kaCM}), but we do not need to discuss partial Fourier transforms here.

As a direct consequence of Lemma \ref{pfflemma}, we obtain the following corollary:
\begin{cor}\label{anyF}
For any locally constant Schwartz function $\tilde{F}$ satisfying the conditions of Lemma \ref{pfflemma} for some $\Gamma$ with positive volume, there is a Siegel section $f_{\tilde{F}}$ in $\otimes_{v\in\Sigma}\Ind_{P\left(E_v\right)}^{G\left(E_v\right)}\left(\chi_v\cdot\left|\cdot\right|^{-2s}\right)$ whose local (at $p$) Fourier coefficient at $\beta$ is $\tilde{F}\left(1, { }^t\beta\right)$.
\end{cor}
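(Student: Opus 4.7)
The plan is to extract the corollary as an immediate rescaling of Lemma \ref{pfflemma}. Given a locally constant Schwartz function $\tilde{F}$ satisfying the conditions of Lemma \ref{pfflemma} for some compact open $\Gamma$ of positive volume, I would first invoke the lemma to produce the Siegel section $f^{P\tilde{F}(-X, Y)}$ in $\otimes_{v\in\Sigma}\Ind_{P(E_v)}^{G(E_v)}\left(\chi_v\cdot\left|\cdot\right|^{-2s}\right)$ whose local Fourier coefficient at $\beta$ equals $\mathrm{volume}(\Gamma)\cdot \tilde{F}(1, {}^t\beta)$.

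Since $\Gamma$ has positive volume, $\mathrm{volume}(\Gamma)$ is a nonzero real number, so I can simply define
\begin{align*}
f_{\tilde{F}} := \frac{1}{\mathrm{volume}(\Gamma)}\, f^{P\tilde{F}(-X, Y)}.
\end{align*}
The scalar multiple of a Siegel section in $\otimes_{v\in\Sigma}\Ind_{P(E_v)}^{G(E_v)}\left(\chi_v\cdot\left|\cdot\right|^{-2s}\right)$ is again a section of the same induced representation (the induction condition is preserved under scaling), and by the linearity of the integral defining the local Fourier coefficients, the local Fourier coefficient of $f_{\tilde{F}}$ at $\beta$ is exactly $\tilde{F}(1, {}^t\beta)$, as required.

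There is no substantive obstacle here, since the only content beyond Lemma \ref{pfflemma} is the observation that $\mathrm{volume}(\Gamma) \neq 0$ allows rescaling; the positivity hypothesis on $\mathrm{volume}(\Gamma)$ in the statement of the corollary is exactly what makes this step valid. If one wanted to allow a family of choices of $\Gamma$ (e.g., varying with the support of $\tilde{F}$), one could take $\Gamma$ to be the support in the first variable, which is compact open and of positive volume whenever $\tilde{F}$ is nonzero; but for the statement as written, no additional work is needed.
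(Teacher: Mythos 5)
Your proposal is correct and matches the paper's (implicit) argument: the paper states the corollary as a direct consequence of Lemma \ref{pfflemma}, and the only content is precisely the rescaling by $\mathrm{volume}(\Gamma)^{-1}$, which is legitimate because the volume is positive and both the induced-representation condition and the Fourier-coefficient integral are compatible with scalar multiplication. No gaps.
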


Furthermore, as we explain in Corollary \ref{reallyanyF}, we can significantly weaken the conditions placed on $\tilde{F}$ in Corollary \ref{anyF}.
\begin{cor}\label{reallyanyF}
Let $k$ be a positive integer.  Let $\tilde{F}$ be a locally constant Schwartz function
\begin{align*}
\tilde{F}: \left(\prod_{v\in\Sigma} \left(M_{n\times n}\left({\Oe}_v\right)\times M_{n\times n}\left({\Oe}_v\right)\right)\right)\rightarrow R
\end{align*}
whose support lies in $\prod_{v\in\Sigma}\left(\gln\left({\Oe}_v\right)\times M_{n\times n}\left({\Oe}_v\right)\right)$ and which satisfies
\begin{align*}
\tilde{F}(e, { }^te^{-1}y) = \mathbf{N}_{E/\IQ}(\det e)^k\tilde{F}(1, y),
\end{align*}
for all $e\in \gln\left(\Oe\right)$ contained in the support $\Gamma$ in the first variable of $\tilde{F}$.  Suppose, furthermore, that $\Gamma$ has positive volume.
Then there is a Siegel section $f_{\tilde{F}}\in \otimes_{v\in\Sigma}\Ind_{P\left(E_v\right)}^{G\left(E_v\right)}\left(\chi_v\cdot\left|\cdot\right|^{-2s}\right)$ whose local (at $p$) Fourier coefficient at $\beta$ is $\tilde{F}\left(1, { }^t\beta\right)$.
\end{cor}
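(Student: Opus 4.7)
The plan is to reduce to Corollary \ref{anyF}. The key observation is that the formula in Lemma \ref{pfflemma} for the local-at-$p$ Fourier coefficient of the associated Siegel section at $\beta$ depends on the input function only through its values on the fiber $\{1\}\times M_{n\times n}(\Oe\otimes\ZZ_p)$. Accordingly, it will suffice to exhibit a locally constant Schwartz function $\tilde F^\sharp$ on the (larger) domain of Lemma \ref{pfflemma} that satisfies the $\chi$-transformation property required there and agrees with $\tilde F(1,\cdot)$ up to a nonzero scalar (which can be absorbed by rescaling the resulting Siegel section). The hypothesis on $\tilde F$ under $\mathbf{N}_{E/\IQ}(\det e)^k$ plays no role in the construction itself; it is imposed because such functions arise naturally as weight data for the Eisenstein measure to be built in Section \ref{emeasuresection}.

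Concretely, since $\chi$ has conductor dividing $p^\infty$ and is continuous, I would choose an integer $N\geq 1$ large enough that $\chi_v(u)=1$ for every finite place $v$ of $K$ and every $u\in 1+p^N\OK_v$, and then set
\begin{align*}
U \;:=\; \left\{X\in \gln(\Oe\otimes\ZZ_p)\;:\;X\equiv 1_n \pmod{p^N}\right\},
\end{align*}
a compact open subgroup of positive volume on which $\prod_v\chi_v\circ\det$ is trivial. I then define
\begin{align*}
\tilde F^\sharp(X,Y) \;:=\; 1_U(X)\cdot \tilde F(1,{}^tX\, Y),
\end{align*}
extended by zero to the domain $\prod_{v\in\Sigma}\left(\Hom_{K_v}(V_v,V_{d,v})\oplus\Hom_{K_v}(V_v,V^d_v)\right)$ of Lemma \ref{pfflemma}. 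This $\tilde F^\sharp$ is locally constant and Schwartz with support in the first variable equal to $U$, and for every $X\in U$,
\begin{align*}
\tilde F^\sharp(X,{}^tX^{-1}Y) \;=\; \tilde F(1,Y) \;=\; \prod_v\chi_v(\det X)\cdot \tilde F^\sharp(1,Y),
\end{align*}
since $\chi$ is trivial on $\det(U)$. Thus the hypotheses of Lemma \ref{pfflemma} hold with $\Gamma=U$, and Corollary \ref{anyF} applied to $\tilde F^\sharp$ produces a Siegel section in $\otimes_{v\in\Sigma}\Ind_{P(E_v)}^{G(E_v)}(\chi_v\cdot|\cdot|^{-2s})$ whose local-at-$p$ Fourier coefficient at $\beta$ equals $\mathrm{vol}(U)\cdot\tilde F^\sharp(1,{}^t\beta) = \mathrm{vol}(U)\cdot\tilde F(1,{}^t\beta)$. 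Dividing this Siegel section by $\mathrm{vol}(U)$ gives the required $f_{\tilde F}$.

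The one step demanding care is the ambient-compatibility check: one must verify that $\tilde F^\sharp$ extends unambiguously to the Hom-space domain of Lemma \ref{pfflemma}, but this follows because $U\subseteq\gln(\Oe\otimes\ZZ_p)\subseteq\gln(\OK\otimes\ZZ_p)$ and $\tilde F(1,\cdot)$ is supported in $M_{n\times n}(\Oe\otimes\ZZ_p)\subseteq M_{n\times n}(\OK\otimes\ZZ_p)$, so extension by zero is harmless. Beyond this bookkeeping there is no substantive obstacle; the argument is essentially that the Siegel-section construction factors through the restriction $\tilde F\mapsto\tilde F(1,\cdot)$, and for that restriction the stronger $\chi$-transformation hypothesis can always be arranged by shrinking $\Gamma$.
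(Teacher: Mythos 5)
Your proof is correct, but it takes a genuinely different route from the paper's. The paper argues \`a la Katz: using the $\mathbf{N}_{E/\IQ}(\det e)^k$-equivariance, it writes $\tilde F = a_1F_1+\cdots+a_lF_l$ where each $F_i$ satisfies the genuine $\chi$-equivariance \eqref{chi1chi2chi} for one fixed Hecke character $\chi$ of conductor dividing $p^\infty$ (with possibly different supports $\Gamma_1,\ldots,\Gamma_l$), applies Corollary \ref{anyF} to each piece, and takes the corresponding linear combination of sections; linearity of the Fourier coefficient in the section then yields $\sum_i a_iF_i(1,{}^t\beta)=\tilde F(1,{}^t\beta)$. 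You instead exploit the fact that the conclusion only sees the restriction $\tilde F(1,\cdot)$: you build a single auxiliary function $\tilde F^\sharp$ supported in the first variable on a principal congruence subgroup $U$ deep enough that $\chi\circ\det$ is trivial there, so that \eqref{chi1chi2chi} holds for free, and you are right that this renders the $\mathbf{N}_{E/\IQ}(\det e)^k$ hypothesis logically superfluous for the literal statement. Both arguments are sound. What the paper's decomposition buys is that it is exactly the mechanism reused afterwards (in the discussion of $\tilde F_\chi$ following Proposition \ref{globalcoeffsprop}, and in Lemma \ref{lemprop8}) to assemble $G_{k,\nu,F}$ as a linear combination of the $G_{k,\nu,\chi,\tilde F}$ attached to honest Hecke characters; there the equivariance of $F$ under $\OK^\times$ genuinely matters, since by Lemma \ref{levieffect} it governs the $q$-expansion at every cusp $m(h,\lambda)$ and not just the coefficients $c(\beta,1_n;f)$. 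So your shortcut proves this corollary more cheaply but would not substitute for the decomposition at that later stage. One cosmetic point: Corollary \ref{anyF} already normalizes away $\mathrm{volume}(\Gamma)$, so your final division by the volume of $U$ is only needed if you invoke Lemma \ref{pfflemma} directly rather than Corollary \ref{anyF}.
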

\begin{proof}
Let $\tilde{F}$ be a locally constant Schwartz function
\begin{align*}
\tilde{F}: \prod_{v\in\Sigma} \left(M_{n\times n}\left({\Oe}_v\right)\times M_{n\times n}\left({\Oe}_v\right)\right)\rightarrow R
\end{align*}
whose support lies in $\prod_{v\in\Sigma}\left(\gln\left({\Oe}_v\right)\times M_{n\times n}\left({\Oe}_v\right)\right)$ and which satisfies
\begin{align}\label{katzfequ}
\tilde{F}(e, { }^te^{-1}y) = \mathbf{N}_{E/\IQ}(\det e)^k\tilde{F}(1, y),
\end{align}
for all $e\in \gln\left(\Oe\right)$ contained in the support in the first variable of $\tilde{F}$.  Then since $\tilde{F}$ is locally constant, has compact support, and satisfies Equation \eqref{katzfequ}, there is a unitary Hecke character $\chi$ whose infinity type is as in Expression \eqref{infinitytypeknu} and such that the conductor $\mathfrak{m} = p^d$ for $d$ a sufficiently large positive integer) so that
\begin{align*}
\tilde{F} = a_1F_1+\cdots + a_l F_l
\end{align*}
for some positive integer $l$ and $a_1, \ldots, a_l\in R$, and functions $F_1, \ldots, F_l$ meeting the conditions of Corollary \ref{anyF} (all for this {\it same character} $\chi$ but possibly with {\it different supports} $\Gamma_1, \ldots, \Gamma_l$, respectively, in the first variable).

Now, we define
\begin{align*}
f_{\tilde{F}} := a_1f_{F_1}+\cdots+a_lf_{F_l},
\end{align*}
where $f_{F_1}, \ldots, f_{F_l}$ are the Siegel sections obtained in Corollary \ref{anyF}.  Then $f_{\tilde{F}}$ is a linear combination of elements of the module $\otimes_{v\in\Sigma}\Ind_{P\left(E_v\right)}^{G\left(E_v\right)}\left(\chi_v\cdot\left|\cdot\right|^{-2s}\right)$.  So $f_{\tilde{F}}$ is itself an element of $\otimes_{v\in\Sigma}\Ind_{P\left(E_v\right)}^{G\left(E_v\right)}\left(\chi_v\cdot\left|\cdot\right|^{-2s}\right)$.  Now, the Fourier coefficient of a sum of Siegel sections is the sum of the Fourier coefficients of these Siegel sections.  So the Fourier coefficient at $\beta$ of $f_{\tilde{F}}$ is
\begin{align*}
a_1F_1(1, { }^t\beta)+\cdots+a_lF_l(1, { }^t\beta) = \tilde{F}(1, { }^t\beta).
\end{align*}
\end{proof}

\subsubsection{Siegel Sections away from $p$ and $\infty$}\label{sectionsnotpinfty}

We use the same Siegel sections at places $v\ndivides p\infty$ as in \cite[Section 2.2.9]{AppToSHL}.  We now recall the key properties of these Siegel sections, which are described in more detail in \cite[Section 18]{sh}.

Let $\mathfrak{b}$ be an ideal in $\Oe$ prime to $p$.  For each finite place $v$ prime to $p$, there is a Siegel section $f_v^{\mathfrak{b}} =f_v^{\mathfrak{b}}(\bullet; \chi_v, s)\in\Ind_{P(E_v)}^{G(E_v)}(\chi_v, s)$ with the following property: By \cite[Proposition 19.2]{sh}, whenever the Fourier coefficient $c(\beta, m(1); f_v^{\mathfrak{b}})$ is non-zero,
\begin{tiny}
\begin{align}\label{coeffnotpinfty}
\prod_{v\ndivides p\infty}c(\beta, m(1); f_v^{\mathfrak{b}}) = N_{E/\IQ}(\mathfrak{b}\Oe)^{-n^2}\prod_{i = 0}^{n-1}L^p\left(2s-i, \chi_{E}^{-1}\tau^i\right)^{-1}\prod_{v\ndivides p\infty}P_{\beta, v, \mathfrak{b}}\left(\chi_E(\pi_v)^{-1}\left|\pi_v\right|_v^{2s}\right),
\end{align}
\end{tiny}
where:
\begin{enumerate}
\item{the product is over primes of $E$;}
\item{the Hecke character $\chi_E$ is the restriction of $\chi$ to $E$;}
\item{the function $P_{\beta, v, \mathfrak{b}}$ is a polynomial that is dependent only on $\beta$, $v$, and $\mathfrak{b}$ and has coefficients in $\ZZ$ and constant term $1$;}
\item{the polynomial $P_{\beta, v, \mathfrak{b}}$ is identically $1$ for all but finitely many $v$;}
\item{$\tau$ is the Hecke character of $E$ corresponding to $K/E$;}
\item{$\pi_v$ is a uniformizer of $O_{E, v}$, viewed as an element of $K^\times$ prime to $p$;}
\item{\begin{align*}
L^p(r, \chi_E^{-1}\tau^i) = \prod_{v\ndivides p\infty\mathrm{cond}{\tau}}\left(1-\chi_v(\pi_v)^{-1}\tau^i(\pi_v)\left|\pi_v\right|_v^r\right)^{-1}.
\end{align*}
}
\end{enumerate}

\subsubsection{Global Fourier coefficients}

Recall that by Lemma \ref{levieffect}, the Fourier coefficients $c(\beta, h; f)$ are completely determined by the coefficients $c(\beta, 1_n; f)$.  In Proposition \ref{globalcoeffsprop}, we combine the results of Sections \ref{sectionsinfty}, \ref{sectionatp}, and \ref{sectionsnotpinfty} in order to give the global Fourier coefficients of the Eisenstein series $E_f$.

Let $\chi$ be a unitary Hecke character as above, and furthermore, suppose the infinity type of $\chi$ is
\begin{align}
\prod_{\sigma\in\Sigma}\sigma^{-k-2\nu(\sigma)}\left(\sigma\bar{\sigma}\right)^{\frac{k}{2}+\nu(\sigma)}\label{integralk}
\end{align}
(i.e. $k(\sigma) = k\in\ZZ$ for all $\sigma\in\Sigma$).  Let $C(n, K)$ be the constant dependent only upon $n$ and $K$ defined in Equation \eqref{CnK}.

\begin{prop}\label{globalcoeffsprop}  
Let $k\geq n$, let $\nu = \left(\nu(\sigma)\right)\in\ZZ^\Sigma$, and let
\begin{align}\label{aholosec}
f_{k, \nu, \chi, \tilde{F}} := f_{k, \nu, \chi, \mathfrak{b}, \tilde{F}} := \otimes_{v\in\Sigma}f_{\tilde{F},v}\otimes f^{k, \nu}_{\infty}\left(\bullet; i1_n, \chi, s\right)\otimes f^{\mathfrak{b}}\in \Ind_{P\left(\adeles_E\right)}^{G\left(\adeles_E\right)}\left(\chi\cdot\left|\cdot\right|_K^{-s} \right),
\end{align}
with $\chi$ as in Equation \eqref{integralk}, $\otimes_{v\divides\Sigma}f_{\tilde{F}, v}$ the section at $p$ from Corollary \ref{anyF}, $f^{k, \nu}_{\infty}$ the section at $\infty$ defined in Section \ref{sectionsinfty}, and $f^{\mathfrak{b}}$ the section away from $p$ and $\infty$ defined in Section \ref{sectionsnotpinfty}.
 
Then at $s=\frac{k}{2}$, all the nonzero Fourier coefficients $c(\beta, 1_n; f_{k, \nu, \chi, \tilde{F}})$ are given by
\begin{tiny}
\begin{align}\label{FourierCoeffFormula}
 D(n, K, \mathfrak{b}, p, k)\prod_{v\ndivides p\infty}P_{\beta, v, \mathfrak{b}}\left(\chi_E(\pi_v)^{-1}\left|\pi_v\right|_v^{k}\right)\tilde{F}\left(1, { }^t\beta\right)\prod_{v\in\Sigma}\sigma_v(\det\beta)^{k-n}\e\left(i\tr_{E/\IQ}(\beta)\right).
 \end{align}
\end{tiny}
where
\begin{tiny} 
\begin{align*}
D&(n, K, \mathfrak{b}, p, k)\\  
&= C(n, K)N(\mathfrak{b}{\Oe})^{-n^2}\prod_{i = 0}^{n-1}\left(2^{(1-n)n}i^{-nk}(2\pi)^{nk}\left(\pi^{n(n-1)/2}\prod_{t=0}^{n-1}\Gamma(k-t)\right)^{-1}\right)^{[E:\IQ]}\prod_{i = 0}^{n-1}L^p\left(k-i, \chi_{E}^{-1}\tau^i\right)^{-1}.
\end{align*}
\end{tiny}

\end{prop}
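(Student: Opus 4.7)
The plan is to compute the global Fourier coefficients place by place, using the factorization of Siegel sections. Since $f_{k,\nu,\chi,\tilde{F}}$ is chosen as a pure tensor, the formula recalled just before Equation \eqref{CnK} gives
\[
c\bigl(\beta, 1_n; f_{k,\nu,\chi,\tilde{F}}\bigr) \;=\; C(n,K)\prod_v c_v\bigl(\beta, 1_n; f_{k,\nu,\chi,\tilde{F}}\bigr),
\]
so the proof reduces to assembling the local Fourier coefficients already computed in Sections \ref{sectionsinfty}, \ref{sectionatp}, and \ref{sectionsnotpinfty}.

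First I would handle the archimedean places. For each $v \in \Sigma$, Equation \eqref{coeffinfty} gives the archimedean coefficient at $h=1_n$ as an explicit constant (involving $\Gamma$-values and powers of $2\pi i$) times $\sigma_v(\det\beta)^{k-n}\e(i\tr(\sigma_v(\beta)))$; taking the product over the $[E:\IQ]$ archimedean places of $E$ produces the $[E:\IQ]$-th power of the constant prefactor, the full term $\prod_{v\in\Sigma}\sigma_v(\det\beta)^{k-n}$, and, using $\sum_{v\in\Sigma}\tr(\sigma_v(\beta)) = \tr_{E/\IQ}(\beta)$, the exponential $\e(i\tr_{E/\IQ}(\beta))$. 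In particular, since $k\geq n$, the archimedean contribution forces $c(\beta, 1_n; f_{k,\nu,\chi,\tilde{F}})=0$ unless $\beta$ has rank $n$, matching the claim about ``all nonzero Fourier coefficients.''

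Next I would plug in the coefficient at $p$. By Corollary \ref{anyF}, the local Fourier coefficient of $\otimes_{v\divides p}f_{\tilde{F},v}$ at $\beta$ equals $\tilde{F}(1, {}^t\beta)$, so this simply contributes the factor $\tilde{F}(1, {}^t\beta)$ in Equation \eqref{FourierCoeffFormula}. Finally, the product over the finite places away from $p$ is exactly Equation \eqref{coeffnotpinfty} of Section \ref{sectionsnotpinfty}, yielding the normalizing factor $N(\mathfrak{b}\Oe)^{-n^2}$, the reciprocal of the partial $L$-values $\prod_{i=0}^{n-1}L^p(2s-i, \chi_E^{-1}\tau^i)^{-1}$, and the product of local polynomials $\prod_{v\ndivides p\infty}P_{\beta, v, \mathfrak{b}}\bigl(\chi_E(\pi_v)^{-1}|\pi_v|_v^{2s}\bigr)$. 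Specializing to $s=\frac{k}{2}$ turns $2s-i$ into $k-i$ and $|\pi_v|_v^{2s}$ into $|\pi_v|_v^{k}$, giving exactly the away-from-$p\infty$ factor in \eqref{FourierCoeffFormula}.

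The last step is purely organizational: gather every $\beta$-independent term ($C(n,K)$, the archimedean constant raised to $[E:\IQ]$, $N(\mathfrak{b}\Oe)^{-n^2}$, and the inverse partial $L$-values) into a single constant, which is precisely the definition of $D(n,K,\mathfrak{b},p,k)$. I do not expect any genuine obstacle here; the main thing to be careful about is the bookkeeping of exponents, in particular checking that the $2s \mapsto k$ specialization in the unramified factor and the $[E:\IQ]$-fold repetition of the archimedean Gamma factor match the stated formula for $D(n,K,\mathfrak{b},p,k)$, and that the normalization $h=1_n$ is consistent with Lemma \ref{levieffect} so that all other coefficients are recovered from the ones computed here.
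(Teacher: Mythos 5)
Your proposal is correct and follows essentially the same route as the paper: the paper's own proof simply cites the factorization $c(\beta,1_n;f)=C(n,K)\prod_v c_v(\beta,1_n;f)$ together with Equation \eqref{coeffinfty}, Corollary \ref{anyF}, and Equation \eqref{coeffnotpinfty}, which is exactly the place-by-place assembly you carry out. Your version just makes explicit the bookkeeping (the $[E:\IQ]$-fold archimedean factor, the $2s\mapsto k$ specialization, and the collection of $\beta$-independent terms into $D(n,K,\mathfrak{b},p,k)$) that the paper leaves to the reader.
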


\begin{proof}
This follows directly from Equation \eqref{CnK}, Corollary \ref{anyF}, and Equations \eqref{coeffnotpinfty} and \eqref{coeffinfty}.  
\end{proof}

Given $\tilde{F}$ as above, define
\begin{align*}
\tilde{F}_{\chi}:\left(\OK\otimes\ZZ_p\right)\times M_{n\times n}\left(\Oe\otimes\ZZ_p\right)\rightarrow R
\end{align*}
to be the locally constant function whose support lies in
\begin{align*}
\left(\OK\otimes\ZZ_p\right)^\times\times M_{n\times n}\left(\Oe\otimes\ZZ_p\right)
\end{align*}
and which is defined on $\left(\OK\otimes\ZZ_p\right)^\times\times M_{n\times n}\left(\Oe\otimes\ZZ_p\right)$ by
\begin{align}\label{Fchiequ}
\tilde{F}_{\chi}\left(x, y\right)=\prod_{v\in\Sigma}\chi_v(x)\tilde{F}\left(1, \mathbf{N}_{K/E}(x){ }^ty\right),
\end{align}
where the product is over the primes in $\Sigma$ dividing $p$.  Then for all $e\in\OK^\times$,
\begin{align*}
\tilde{F}_{\chi}\left(ex, \mathbf{N}_{K/E}(e^{-1})y\right) = \mathbf{N}_{k, \nu}(e)\tilde{F}_{\chi}\left(x, y\right)
\end{align*}
for all $x\in \OK\otimes\ZZ_p$ and $y\in M_{n\times n}\left(\Oe\otimes\ZZ_p\right)$.  On the other hand, any locally constant function
\begin{align*}
F: \left(\OK\otimes\ZZ_p\right)\times M_{n\times n}\left(\Oe\otimes\ZZ_p\right)\rightarrow R
\end{align*}
supported on $\left(\OK\otimes\ZZ_p\right)^\times\times M_{n\times n}\left(\Oe\otimes\ZZ_p\right)$ which satisfies
\begin{align*}
F\left(ex, \mathbf{N}_{K/E}(e)^{-1}y\right) = \mathbf{N}_{E/\IQ}(e)^kF\left(x, y\right)
\end{align*}
for all $e\in\OK^\times$, $x\in \OK\otimes\ZZ_p$, and $y\in M_{n\times n}\left(\Oe\otimes\ZZ_p\right)$
can be written as a linear combination of such functions $\tilde{F}_{\chi}$ for Hecke characters $\chi$ of infinity type $(k, \nu)$ and conductor dividing $p^\infty$ and functions $\tilde{F}$ as above.

Now, let 
\begin{align*}
G_{k, \nu, \chi, \tilde{F}} = D(n, K, \mathfrak{b}, p, k)^{-1}E_{f_{k, \nu, \chi, \tilde{F}}}.
\end{align*}
Applying Proposition \ref{globalcoeffsprop}, we see that the Fourier coefficients of the holomorphic function $G_{k, \nu, \chi, \tilde{F}}\left(z, \frac{k}{2}\right)$ on $\hn$ are all finite $\ZZ$-linear combinations (over a finite set of $p$-adic units $a\in K$) of terms of the form
\begin{align}\label{aokoe}
\tilde{F}_{\chi}\left(a, \mathbf{N}_{K/E}(a)^{-1}\beta\right)\mathbf{N}_{k, \nu}\left(a^{-1}\det\beta\right)\mathbf{N}_{E/\IQ}\left(\det\beta\right)^{-n}
\end{align}
(We remark that although $\pi_v$ from Proposition \ref{globalcoeffsprop} is a place of $E$ for all $v$, the element $a$ from Expression \eqref{aokoe} might be in $K$ but not $\Oe$, depending on our choice of cusp.  The effect of the change of a cusp $m\in GM_+\left(\adeles_E\right)$, on $q$-expansions is given in Lemma \ref{levieffect}.)

Thus, we obtain the following result.
\begin{lem}\label{lemprop8}
Let $k\in\ZZ_{\geq n}$ and $\nu\in\ZZ^\Sigma$.  Let $F$ be a locally constant function
\begin{align*}
F: \left(\OK\otimes\ZZ_p\right)\times M_{n\times n}\left(\Oe\otimes\ZZ_p\right)\rightarrow R
\end{align*}
supported on $\left(\OK\otimes\ZZ_p\right)^\times\times M_{n\times n}\left(\Oe\otimes\ZZ_p\right)$ which satisfies
\begin{align*}
F\left(ex, \mathbf{N}_{K/E}(e)^{-1}y\right) = \mathbf{N}_{k, \nu}(e)F\left(x, y\right)
\end{align*}
for all $e\in\OK^\times$, $x\in \OK\otimes\ZZ_p$, and $y\in M_{n\times n}\left(\Oe\otimes\ZZ_p\right)$. 
Then there is a $\ci$-automorphic form $G_{k, \nu, F}(z, s)$ (on $U(n,n)$) of weight $(k, \nu)$  that is holomorphic at $s=k/2$ and whose Fourier expansion at $s=k/2$ at a cusp $m\in GM_+(\adeles_E)$ is of the form $\sum_{0<\beta\in L_m}c(\beta)q^\beta$ (where $L_{m}$ is the lattice in $\hern(K)$ determined by $m$), with $c(\beta)$ a finite $\ZZ$-linear combination of terms of the form given in Expression \eqref{aokoe}.
\end{lem}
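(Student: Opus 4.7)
The plan is to reduce the statement to the concrete constructions already assembled in Section \ref{siegelchoice}. First I would observe that any $F$ of the type described can be written as a finite $R$-linear combination of functions of the form $\tilde{F}_\chi$, where $\chi$ ranges over unitary Hecke characters of infinity type $(k,\nu)$ (as in Expression \eqref{integralk}) and conductor dividing $p^\infty$, and $\tilde{F}$ ranges over locally constant Schwartz functions on $\prod_{v \in \Sigma}\left(M_{n\times n}(\Oe_v) \times M_{n\times n}(\Oe_v)\right)$ supported in $\prod_{v\in\Sigma}\left(\gln(\Oe_v) \times M_{n\times n}(\Oe_v)\right)$ and satisfying the covariance of Corollary \ref{reallyanyF}. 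This decomposition is possible because the space of locally constant functions on $(\OK \otimes \ZZ_p)^\times \times M_{n\times n}(\Oe \otimes \ZZ_p)$ satisfying the $\mathbf{N}_{k,\nu}$-twisted equivariance under $\OK^\times$ is spanned by such products, via a standard Fourier-analytic decomposition on the finite abelian quotients of $(\OK \otimes \ZZ_p)^\times$ by the closures of congruence subgroups, exactly as in the discussion immediately preceding the lemma.

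Next, for each summand $\tilde{F}_\chi$ I would form the Siegel section $f_{k,\nu,\chi,\tilde{F}}$ given in \eqref{aholosec}, built as the tensor product of the section at $p$ provided by Corollary \ref{anyF}, the archimedean section $f_\infty^{k,\nu}(\bullet; i1_n, \chi, s)$ from Section \ref{sectionsinfty}, and the away-from-$p\infty$ section $f^{\mathfrak{b}}$ from Section \ref{sectionsnotpinfty}. I would then define
\begin{align*}
G_{k,\nu,\chi,\tilde{F}}(z,s) := D(n,K,\mathfrak{b},p,k)^{-1} E_{f_{k,\nu,\chi,\tilde{F}}}(z,s),
\end{align*}
and finally take $G_{k,\nu,F}$ to be the corresponding finite linear combination of these $G_{k,\nu,\chi,\tilde{F}}$ with the same coefficients appearing in the decomposition of $F$.

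The weight $(k,\nu)$ of $E_{f_{k,\nu,\chi,\tilde{F}}}$ comes directly from the choice of archimedean section $f_\infty^{k,\nu}$; holomorphy at $s = k/2$ is the final assertion of Section \ref{sectionsinfty}, since for $k \geq n$ the archimedean section, and hence the entire Eisenstein series, is holomorphic there (the only possible poles, described in \cite{tan}, do not occur in this range). To read off the Fourier coefficients at $s=k/2$, I would apply Proposition \ref{globalcoeffsprop}: the normalization by $D(n,K,\mathfrak{b},p,k)^{-1}$ cancels the global constant, and assembling the local factors yields coefficients given by the expression in \eqref{aokoe} involving $\tilde{F}_\chi(a, \mathbf{N}_{K/E}(a)^{-1}\beta)\mathbf{N}_{k,\nu}(a^{-1}\det\beta)\mathbf{N}_{E/\IQ}(\det\beta)^{-n}$, up to a sum over finitely many $p$-adic units $a \in K$ that arise from the combinatorics of the cusp (tracked via Lemma \ref{levieffect}). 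Linearity in $F$ then gives the stated form of the Fourier coefficients of $G_{k,\nu,F}(z,s)$ at $s=k/2$.

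The only genuine work is the initial decomposition step: verifying that the class of admissible $F$ is exactly the $R$-span of the $\tilde{F}_\chi$ as $\chi$ and $\tilde{F}$ vary. Once this is in hand, the rest is bookkeeping against the formulas already proved. I expect this decomposition, plus the careful tracking of the $\mathbf{N}_{K/E}$ vs.\ $\mathbf{N}_{E/\IQ}$ normalizations between the covariance hypotheses on $F$ and on $\tilde{F}$, to be the main place where one must be careful; everything else follows mechanically from Corollary \ref{reallyanyF}, Proposition \ref{globalcoeffsprop}, and the discussion culminating in \eqref{aokoe}.
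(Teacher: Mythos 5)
Your proposal follows essentially the same route as the paper: decompose $F$ as a finite $R$-linear combination of the functions $\tilde{F}_{\chi}$ attached to Hecke characters $\chi$ of infinity type $(k,\nu)$ and conductor dividing $p^\infty$, form the normalized Eisenstein series $G_{k,\nu,\chi,\tilde{F}} = D(n,K,\mathfrak{b},p,k)^{-1}E_{f_{k,\nu,\chi,\tilde{F}}}$ from the section \eqref{aholosec}, read off the Fourier coefficients from Proposition \ref{globalcoeffsprop} together with Lemma \ref{levieffect}, and take the corresponding linear combination. This matches the paper's argument, including correctly identifying the decomposition of $F$ into the $\tilde{F}_{\chi}$ (which the paper also asserts rather than proves in detail) as the one step requiring genuine verification.
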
  
(We obtain $G_{k, \nu, F}$ by taking a linear combination of the automorphic forms $G_{k, \nu, \chi, \tilde{F}}$.)

\section{Differential Operators}\label{diffopssection}

\subsection{$C^\infty$ Differential Operators}

In this section, we summarize results on $\ci$-differential operators that were studied extensively by Shimura, for instance in \cite{sh84}, \cite{shclassical}, \cite[Section 23]{sh}, and \cite[Section 12]{shar}.  Let $T =M_{n\times n}(\IC)$; we identify $T$ with the tangent space of $\hn$.  For each nonnegative integer $d$, let $\mathfrak{S}_d(T)$ denote the vector space of $\IC$-valued homogeneous polynomial functions on $T$ of degree $d$.  (For instance, the $e$-th power of the determinant function $\det^e$ is in $\mathfrak{S}_{ne}(T)$.) We denote by $\tau^d$ the representation of $\gln(\IC)\times\gln(\IC)$ on $\mathfrak{S}_d(T)$ defined by
\begin{align*}
\tau^d\left(a, b\right)g (z) = g\left({ }^ta z b\right)
\end{align*}
for all $a, b\in \gln(\IC)$, $z\in T$, and $g\in \mathfrak{S}_d(T)$.  

The classification of the irreducible subspaces of polynomial representations of $\gln(\IC)$ and of irreducible subspaces of $\tau^r$ for each $r$ is provided in \cite[Section 2]{shclassical} and \cite[Sections 12.6 and 12.7]{sh}.  We summarize the key features needed for our results; further details can be found in those two references.  Given a matrix $a\in M_{n\times n}(\IC)$, let ${\det}_j(a)$\label{detidefn} denote the determinant of the upper left $j\times j$ submatrix of $a$.  Each polynomial representation of $\gln(\IC)$ can be composed into a direct sum of irreducible representations of $\gln(\IC)$.  Each irreducible representation $\rho$ of $\gln(\IC)$ contains a unique eigenvector $p$ of highest weight $r_1\geq \cdots r_n\geq 0$ (for a unique ordered $n$-tuple $r_1\geq \cdots \geq r_n\geq 0$ of integers dependent on $\rho$), which is a common eigenvector of the upper triangular matrices of $\gln(\IC)$ and satisfies
\begin{align}
\rho(a) p &= \prod_{j=1}^{n}{\det}_j(a)^{e_j} p\nonumber\\
e_j&= r_j-r_{j+1}, 1\leq j\leq n-1\label{ej1}\\
e_n & = r_n,\label{ej2}
\end{align}
for all $a$ in the subgroup of upper triangular matrices in $\gln(\IC)$.  Also, for each ordered $n$-tuple $r_1\geq \cdots \geq r_n\geq 0$, there is a unique corresponding irreducible polynomial representation of $\gln(\IC).$  If $\rho$ and $\sigma$ are irreducible representations of $\gln(\IC)$, then by\cite[Theorem 12.7]{shar}, $\rho\otimes\sigma$ occurs in $\tau^r$ if and only if the $\rho$ and $\sigma$ are representations of the same highest weights $r_1\geq \cdots \geq r_n$ as each other and $r_1+\cdots + r_n = r$.  In this case, $\rho\otimes\sigma$ occurs with multiplicity one in $\tau^r$, and the corresponding irreducible subspace of $\tau^r$ contains the polynomial $p(x) = \prod_{j=1}^n\det_j(x)^{e_j}$ (where $e_j$ is defined as in Equations \eqref{ej1} and \eqref{ej2}); this polynomial $p(x)$ is an eigenvector of highest weight with respect to both $\rho$ and $\sigma$.

Let $\left(Z, \tau_Z\right)$ be an irreducible subspace of $\left(\mathfrak{S}_d, \tau\right)$ of highest weight $r_1\geq \cdots \geq r_n$, and let $\zeta\in Z$.  By \cite{shclassical}, \cite[Section 23]{sh}, and \cite[Section 13]{shar}, there are $\ci$-differential operators $D_{k}\left(\zeta\right)$ that act on $\ci$-functions on $\hn$ and have the property that for all $\alpha\in U\left(\eta_n\right)$, $\zeta\in Z\subseteq \mathfrak{S}_d\left(T\right)$, and complex numbers $s$,
\begin{align}\label{diffopsgen}
D_{k}\left(\zeta\right)\left(\delta^s||_{k, \nu}\alpha\right) = i^d\psi_Z(-k-s)\left(\delta^s||_{k, \nu}\alpha\right)\cdot \zeta\left({ }^t\eta^{-1}{ }^t\overline{\lambda_{\alpha}}{ }^t\mu_{\alpha}^{-1}\right),
\end{align}
where (as proved in \cite[Theorem 4.1]{shclassical})
\begin{align*}
\psi_Z(s)=\prod_{h=1}^n\prod_{j=1}^{r_h}(s-j+h).
\end{align*}
If $\rho$ is the representation of $\gln(\IC)\times \gln(\IC)$, there is a differential operator $D_\rho^Z$ (defined on \cite[p. 222]{EDiffOps} and in \cite[Equation (12.20)]{shar}) such that for all $\ci$-functions $f$ on $\hn$, $D_\rho^Zf$ is a $\Hom(Z, \IC)$-valued $\ci$-function on $\hn$ with the property that
\begin{align}\label{gendifopeser}
\left(D_\rho^Z f\right)||_{\rho\otimes\tau_Z}\alpha = D_\rho^Z\left(f||_\rho \alpha\right)
\end{align}
for all $\alpha\in G$.  Furthermore, if $\rho$ is defined by $\rho(a, b) = \det(b)^k$, then as the proof of \cite[Lemma 23.4]{sh} explains, 
\begin{align*}
D_k(\zeta)f = (D_\rho^Z f)(\zeta).
\end{align*}
Note that when $Z$ is a $\Sigma$-tuple $\left(Z_v\right)_{v\in\Sigma}$, we also use the notation $\psi_Z$ to denote $\prod_{v\in \Sigma}\psi_{Z_v}$.

So for example, if $d\in\ZZ_{\geq 0}$ and $\zeta = \det^d$, then Equation \eqref{diffopsgen} becomes
\begin{align*}
D_{k}\left({\det}^d\right)\left(\delta^s||_{k, \nu}\alpha\right) &= i^{nd}\psi_Z(-k-s)\delta^s||_{k, \nu}\alpha\cdot {\det}^d\left({ }^t\eta^{-1}{ }^t\overline{\lambda_{\alpha}}\cdot { }^t\mu_{\alpha}^{-1}\right)\\
&= \left(\frac{i}{2}\right)^{nd}\prod_{h=1}^n\prod_{j=1}^d(-k-s-j+h)\delta^{s-d}||_{k+2d, \nu-d}\alpha.
\end{align*}  Consequently, if $d = \left(d(\sigma)\right)_{\sigma\in\Sigma}\in\ZZ^\Sigma_{\geq 0}$, then
\begin{multline*}
\left(\prod_{\sigma\in\Sigma}D_{k}\left({\det}^{d(\sigma)}\right)\right)\left(G_{k, \nu, F}\left(z,\frac{k}{2}\right)\right) \\
	= \prod_{\sigma\in\Sigma}\left(\frac{i}{2}\right)^{nd(\sigma)}\prod_{h=1}^n\prod_{j=1}^{d(\sigma)}(-k-j+h)G_{k+2d, \nu-d, F}\left(z, \frac{k}{2}\right),
\end{multline*}
as in \cite[Equation (43)]{AppToSHL}.

As noted in \cite[Section 6]{shclassical}, $G_{k, \nu, F}\left(z, s\right)$ is a special case of the automorphic form $G_{k, \nu, \zeta, F}\left(z, s\right)$ that satisfies
\begin{align*}
D_k(\zeta)\left(G_{k, \nu, F}\left(z, \frac{k}{2}\right)\right) = \prod_{v\in\Sigma}i^{d_v}\psi_{Z_v}(-k)G_{k, \nu, \zeta, F}\left(z, \frac{k}{2}\right),
\end{align*}
where
\begin{align*}
D_k(\zeta) = \prod_{v\in\Sigma}D_k(\zeta_v).
\end{align*}
The case where $\zeta$ is a highest weight vector will be of particular interest to us.

\subsection{Rational Representations}\label{rationalrepresentations-section}

In order to generalize our discussion from the $\ci$-setting to the $p$-adic setting, we introduce rational representations, following \cite[Section 8.1.2]{hida} (which, in turn, summarizes relevant results from \cite{hidaGME} and \cite{jantzenRAG}).  

Let $A$ be a ring or a sheaf of rings over a scheme.  
Let $B$ denote the Borel subgroup of $\gln$ consisting of upper triangular matrices in $\gln$.  Let $N$ denote the unipotent radical of $B$.  Let $T\cong B/N$ denote the torus.  Following the notation of \cite[Section 8.1.2]{hida}, for each character $\kappa$ of $T$, we define\begin{align*}
R_A[\kappa]&=\Ind_{B}^{\gln}(\kappa)\\
&\left(= \left\{f:\gl_n/N\rightarrow \mathbf{A}^1\mid
 f(ht) = \kappa(t)f(h) \mbox{ for all } t\in T, h\in \gln/N\right\}\right).
\end{align*}
The group $\gln$ acts on $R_A[\kappa]$ via
\begin{align*}
(g\cdot f)(x) = f\left(g^{-1} x\right).
\end{align*}

As noted on \cite[p. 332]{hida}, there is a unique (up to $A$-unit multiple) $N$-invariant linear form $\ellcan$ in the dual space  $R_A[\kappa]\dual$ that generates $\left(R_A[\kappa]\dual\right)^N$ and can be normalized so that for all $f\in R_A[\kappa]$
\begin{align*}
\ellcan (f) = f\left(1_n\right),
\end{align*}
where $1_n$ denotes the origin in $\gln/N$.

Note that for each $\ci$-automorphic form $f$ on $\prod_{v\in\Sigma}\hn$ such that $f||_{k, \nu}\alpha = f$ (for all $\alpha$ in some congruence subgroup) and each highest weight vector $\zeta$ in an irreducible representation of highest weight $\kappa$, we may view $D_k(\zeta)f$ as an $R_{\IC}[\det^{k+\nu}\cdot\kappa]\otimes R_{\IC}[\det^{-\nu}\cdot\kappa]$-valued function on $\hn$.
We define a corresponding character $\kappa_{k, \nu}\left(t_1, \ldots, t_n, t_{n+1}, \ldots, t_{2n}\right) = \prod_{i=1}^nt_i^{k+\nu}t_{i+n}^{-\nu}$ on $T(\IC)\times T(\IC)$.

\subsection{The Algebraic Geometric Setting}

As explained in detail in \cite[Section 8.4]{EDiffOps}, which generalizes \cite[Section 2.3]{kaCM}, the $\ci$-differential operators discussed by Shimura have a geometric interpretation in terms of the Gauss-Manin connection.  $\ci$-automorphic forms can, as explained in \cite[Section 2]{EDiffOps}, be interpreted as sections of a vector bundle on (the complex analytification of) the moduli spaces $\mathcal{M}_{n,n}=\Sh(W)$.  Applying a differential operator (as discussed in \cite[Sections 6-9]{EDiffOps}) to an automorphic form of weight $\rho$ on $\mathcal{M}_{n,n}$ sends it to an automorphic form of weight $\rho\otimes \tau$ on $\mathcal{M}_{n,n}$.

We now recall the setting of \cite[Section 3]{AppToSHL}, as we will momentarily be in a similar (but not identical) situation.  For any $\OK$-algebra $R$, the $R$-valued points of $_\mathcal{K}\Sh(R)$ parametrize tuples $\uA$ consisting of an abelian variety together with a polarization, endomorphism, and level structure.  (We shall not need further details of these points here; more details are available, though, in a number of references, including \cite[Chapter 1]{la}, \cite[Chapter 7]{hida}, \cite[Section 2]{EDiffOps}.)  Given a point $\uA$ in $_\mathcal{K}\Sh(R)$, we write $\uo_{\uA/R} = \uo_{\uA/R}^+\oplus\uo_{\uA/R}^-$ for the sheaf of one-forms on $\uA$.  (As in \cite[Section 2]{EDiffOps}, $\uo_{\uA/R}^+$ and $\uo_{\uA/R}^-$ are the rank $n$ submodules determined by the action of $\OK$.)  
We identify $G(\IQ)\backslash X\times G(\adeles_f)/\mathcal{K}$ (which we identify with copies of $\hn$) with the points of $_{\mathcal{K}}\Sh(\IC)$; we shall write $\uA(z)$ to mean the point of $\uA$ identified with $z\in \prod_{v\in\Sigma}\hn$ under this identification.  Under this identification, if we fix an ordered basis of differentials  $u_1^\pm, \ldots, u_n^\pm$ for $\uo_{\A/\hn}^\pm$, then an automorphic form $f$ on $\hn$ corresponds to an automorphic form $\tilde{f}$ on $_{\mathcal{K}}\Sh(\IC)$ via
\begin{align*}
f(z) = \tilde{f}(\uA(z), u_1^\pm(z), \ldots, u_n^\pm(z)),
\end{align*}
Any other ordered basis of differentials for $\uo_{\uA/\IC}^\pm$ is simply obtained by the linear action of $\gln(\OK\otimes \IC) \cong \gln(\IC)\times\gln(\IC)$ on $\uo(z)= \uo(z)^+\oplus\uo(z)^-$, and 
\begin{align*}
\tilde{f}\left(\uA(z), g\cdot(u_1^\pm(z), \ldots, u_n^\pm(z))\right) = g\cdot \left(f\left(\uA(z), u_1^\pm(z), \ldots, u_n^\pm(z)\right)\right)
\end{align*}

\subsubsection{A $p$-adic analogue}In \cite[Section 9]{EDiffOps}, we discussed a $p$-adic analogue $\theta_\rho^Z$ of the differential operators $D_\rho^Z$.  The differential operators $\theta_\rho^Z$ act on sections of certain vector bundles on the Igusa tower $T_{\infty, \infty}$ (a formal scheme over the ordinary locus of $_\mathcal{K}\Sh(R)$, for $R$ a mixed characteristic discrete valuation ring with residue characteristic $p$); for details on the Igusa tower, see \cite[Section 8]{hida}.  More precisely, $\theta_\rho^Z$ acts on sections of $R_{T_{\infty, \infty}}[\kappa]$ for various weights $\kappa$.  Note that by \cite[map (8.4)]{hida}, 
\begin{align}\label{ellcanh0}
\ellcan: H^0\left(T_{\infty, \infty}, R_{T_{\infty, \infty}}[\kappa]\right)\rightarrow V^N[\kappa]
\end{align}
 is an injective map into the space $V^N[\kappa]=V_{\infty, \infty}^N[\kappa]$ of $p$-adic modular forms of weight $\kappa$.  Given a highest weight vector $\zeta$ in $Z$, we define $\theta(\zeta) :=\theta_k:=\ellcan\circ \theta_\rho^Z$, where $\rho(a, b):=\det(b)^k$.


In \cite[Section 9]{EDiffOps}, we gave a formula for the action of $p$-adic differential operators $\theta_\rho^Z$ on $q$-expansions.  In particular, if the $q$-expansion of a scalar weight form $f\in H^0\left(T_{\infty, \infty}, R_{T_{\infty, \infty}}[\kappa]\right)$ at a cusp $m\in GM$ is
\begin{align*}
f(q) = \sum_\beta a(\beta) q^\beta,
\end{align*}
and $\zeta$ is a highest weight vector, then it follows from the formulas in \cite[Section 9]{EDiffOps} that
\begin{align}\label{thetaction}
\left(\theta(\zeta) f\right)(q) = \sum_\beta a(\beta)\cdot\zeta(\beta)q^\beta.
\end{align}

\section{A $p$-adic Eisenstein Measure with values in the space of Vector-Weight Automorphic forms}\label{emeasuresection}

\subsection{$p$-adic Eisenstein Series}\label{emeasuressection1}

As we explain in Theorem \ref{thmcts}, when $R$ is a (profinite) $p$-adic ring, we can extend Theorem \ref{proplc} to the case of continuous (not necessarily locally constant) functions $F$.  For the remainder of the paper, let $N$ be as in Section \ref{rationalrepresentations-section}. 
\begin{thm}\label{thmcts}
Let $R$ be a (profinite) $p$-adic $\OK$-algebra.  Fix an integer $k\geq n$, and let $\nu = \left(\nu(\sigma)\right)_{\sigma\in\Sigma}\in\ZZ^\Sigma$.  Let
\begin{align*}
F: \left(\OK\otimes\ZZ_p\right)\times M_{n\times n}\left(\Oe\otimes\ZZ_p\right)\rightarrow R
\end{align*}
be a continuous function supported on $\left(\OK\otimes\ZZ_p\right)^\times\times\gln\left(\Oe\otimes\ZZ_p\right)$ which satisfies
\begin{align*}
F\left(ex, \mathbf{N}_{K/E}(e)^{-1}y\right) = \mathbf{N}_{k, \nu}(e)F\left(x, y\right)
\end{align*}
for all $e\in \OK^\times$, $x\in \OK\otimes\ZZ_p$ and $y\in \gln\left(\Oe\otimes\ZZ_p\right)$.  Then there exists a $p$-adic automorphic form $G_{k, \nu, F}$ whose $q$-expansion at a cusp $m\in GM$ is of the form $\sum_{0<\beta\in L_m}c(\beta)q^\beta$ (where $L_{m}$ is the lattice in $\hern(K)$ determined by $m$), with $c(\beta)$ a finite $\ZZ$-linear combination of terms of the form
\begin{align*}
F\left(a, \mathbf{N}_{K/E}(a)^{-1}\beta\right)\mathbf{N}_{k, \nu}\left(a^{-1}\det\beta\right)\mathbf{N}_{E/\IQ}\left(\det\beta\right)^{-n}
\end{align*}
(where the linear combination is the sum over a finite set of $p$-adic units $a\in K$ dependent upon $\beta$ and the choice of cusp $m\in GM$).
\end{thm}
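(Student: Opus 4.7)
The plan is to approximate the continuous function $F$ by its reductions modulo a cofinal system of discrete quotients of $R$, apply Theorem \ref{proplc} at each level, and then pass to the $p$-adic inverse limit using the $q$-expansion principle.

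First, I would write $R = \varprojlim_{i} R_i$ as an inverse limit of discrete $\OK$-algebras $R_i$, each killed by a power of $p$. Since $X := \left(\OK\otimes\ZZ_p\right)\times M_{n\times n}\left(\Oe\otimes\ZZ_p\right)$ is profinite and each $R_i$ is discrete, the composition
\[
F_i \colon X \xrightarrow{F} R \twoheadrightarrow R_i
\]
is automatically locally constant. The support condition and the $\OK^\times$-equivariance pass to the quotient (the values $\mathbf{N}_{k,\nu}(e)$ lie in $\OK$ and act on $R_i$ via the structure map), so Theorem \ref{proplc} applied to $F_i$ produces an automorphic form $G_{k,\nu,F_i}$ over $R_i$ of weight $(k,\nu)$ whose $q$-expansion at a cusp $m \in GM_+(\adeles_E)$ has coefficients $c_i(\beta)$ that are finite $\ZZ$-linear combinations of the terms
\[
F_i\bigl(a, \mathbf{N}_{K/E}(a)^{-1}\beta\bigr)\mathbf{N}_{k,\nu}\bigl(a^{-1}\det\beta\bigr)\mathbf{N}_{E/\IQ}(\det\beta)^{-n}.
\]

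Next, I would check compatibility of the family $\{G_{k,\nu,F_i}\}_i$ in the inverse system. The Siegel sections used in the construction of $G_{k,\nu,F_i}$ depend only on $F_i$, $k$, $\nu$, and the auxiliary ideal $\mathfrak{b}$, and the $\ZZ$-linear combination witnessing $c_i(\beta)$ depends only on $\beta$ and on the cusp $m$ (not on the ambient coefficient ring $R_i$). Hence the same finite set of $p$-adic units $a \in K$ and the same $\ZZ$-coefficients work for every $i$, and the $q$-expansion coefficients are visibly compatible under the transition maps $R_{i+1} \twoheadrightarrow R_i$. By the $q$-expansion principle (\cite[Prop 7.1.2.15]{la}, together with the reduction in \cite[Section 8.4]{hida} to cusps parametrized by $GM_+(\adeles_E)$), compatibility of $q$-expansions at these cusps upgrades to compatibility of the automorphic forms themselves. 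The family therefore assembles into a $p$-adic automorphic form $G_{k,\nu,F}$ over $R = \varprojlim R_i$, whose $q$-expansion coefficient at $\beta$ is the inverse limit of the $c_i(\beta)$, i.e.\ the corresponding finite $\ZZ$-linear combination with $F_i$ replaced by $F$.

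The only delicate point I expect is ensuring uniformity of the finite linear combinations across the inverse system, so that the limit remains an honest \emph{finite} $\ZZ$-linear combination rather than an infinite formal sum. This uniformity is built into the statement of Theorem \ref{proplc}: the finite set of $p$-adic units $a$ and the integer multiplicities depend only on $\beta$ and on the chosen cusp, with the ring $R_i$ entering only through evaluation of $F_i$. Once a consistent choice of cusp and Siegel sections is made at every level $i$, this obstacle resolves itself, and the $p$-adic automorphic form $G_{k,\nu,F}$ satisfies exactly the claimed $q$-expansion formula.
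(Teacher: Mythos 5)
Your proposal is correct and follows essentially the same route as the paper: reduce $F$ modulo the discrete quotients $R/p^jR$ (where the resulting $F_j$ are automatically locally constant on the profinite source), apply Theorem \ref{proplc} at each finite level, and glue via the $q$-expansion principle for $p$-adic automorphic forms. Your added discussion of the uniformity of the finite $\ZZ$-linear combinations across the inverse system is a detail the paper leaves implicit, but it does not change the argument.
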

\begin{proof}
The proof is similar to the the proof of \cite[Theorem (3.4.1)]{kaCM}.  We remind the reader of the idea of \cite[Theorem (3.4.1)]{kaCM}.  For each integer $j\geq 1$, define
\begin{align*}
F_j: \left(\OK\otimes\ZZ_p\right)\times M_{n\times n}\left(\Oe\otimes\ZZ_p\right)&\rightarrow R/p^jR\\
F_j(x, y) = F(x, y)\mod p^jR.
\end{align*}
Then $F_j$ is a locally constant function satisfying the conditions of Theorem \ref{proplc}.  So by the $q$-expansion principle for $p$-adic forms (\cite[Corollary 10.4]{hi05}, \cite[Section 8.4]{hida}), there is a $p$-adic automorphic form $G_{k, \nu, F}$ whose $q$-expansion satisfies the conditions in the statement of the theorem.
\end{proof}

\begin{cor}\label{corcts}
Let $R$ be a (profinite) $p$-adic $\OK$-algebra, let $\nu = \left(\nu(\sigma)\right)_{\sigma\in\Sigma}\in\ZZ^\Sigma$, and let $k\geq n$ be an integer.  Let
\begin{align*}
F: \left(\OK\otimes\ZZ_p\right)\times M_{n\times n}\left(\Oe\otimes\ZZ_p\right)\rightarrow R
\end{align*}
be a continuous function supported on $\left(\OK\otimes\ZZ_p\right)^\times\times \gln\left(\Oe\otimes\ZZ_p\right)$ which satisfies
\begin{align*}
F\left(ex, \mathbf{N}_{K/E}(e)^{-1}yz\right) = \mathbf{N}_{k, \nu}(e)F\left(x, y\right)
\end{align*}
for all $e\in \OK^\times$, $x\in \OK\otimes\ZZ_p$ and $y\in M_{n\times n}\left(\Oe\otimes\ZZ_p\right)$.  Then
\begin{align}\label{relna}
G_{k,\nu, F} = G_{n, 0, \mathbf{N}_{k-n, \nu}\left(x^{-1}\mathbf{N}_{K/E}(x)^n\det y\right)F(x, y)},
\end{align}
where
\begin{align*}
\mathbf{N}_{k-n, \nu}\left(x^{-1}\mathbf{N}_{K/E}(x)^n\det y\right)F(x, y),
\end{align*}
denotes the function defined by 
\begin{align*}
(x, y)\mapsto \mathbf{N}_{k-n, \nu}\left(x^{-1}\mathbf{N}_{K/E}(x)^n\det y\right)F(x, y).
\end{align*}
on $\left(\OK\otimes\ZZ_p\right)^\times\times M_{n\times n}\left(\Oe\otimes\ZZ_p\right)$ and extended by $0$ to all of $\left(\OK\otimes\ZZ_p\right)\times M_{n\times n}\left(\Oe\otimes\ZZ_p\right)$.
\end{cor}
\begin{proof}
This follows from the $q$-expansion principle \cite[Corollary 10.4]{hi05}.
\end{proof}

\begin{rmk}
We comment now on the relationship between the weight of $G_{n, 0, F}$ and the $p$-adically continuous function $F$ appearing in the subscript.  By Corollary \ref{corcts} and Theorem \ref{proplc}, we have that if $F$ is a locally constant function satisfying the conditions of Corollary \ref{corcts}, then the $p$-adic automorphic form $G_{n, 0, \mathbf{N}_{k-n, \nu}\left(x^{-1}\mathbf{N}_{K/E}(x)^n\det y\right)F(x, y)}$ is the weight $(k, \nu)$ $p$-adic automorphic form $G_{k, \nu, F}$.  More generally, by Equation \eqref{thetaction}, the $p$-adic automorphic form $G_{n, 0, \mathbf{N}_{k-n, \nu}\left(x^{-1}\mathbf{N}_{K/E}(x)^n\det y\right)F(x, y)\zeta_\kappa(\mathbf{N}_{K/E}(x)y^{-1})}$ is the weight $\kappa\cdot\kappa_{k, \nu}$ $p$-adic automorphic form $\theta(\zeta_\kappa)G_{k, \nu, F}$, where $\zeta_\kappa$ is a highest weight vector for the representation of weight $\kappa$.  In particular, the $p$-adic automorphic form $G_{n, 0, \mathbf{N}_{k-n, \nu}\left(x^{-1}\mathbf{N}_{K/E}(x)^n\det y\right)F(x, y)\det(\mathbf{N}_{K/E}(x)y^{-1})^d}$ is the  $p$-adic automorphic form $\theta(\det^d)G_{n, 0, \mathbf{N}_{k-n, \nu}\left(x^{-1}\mathbf{N}_{K/E}(x)^n\det y\right)F(x, y)\zeta_\kappa(\mathbf{N}_{K/E}(x)y^{-1})}$ of weight $(k+2d, \nu-d)$.
\end{rmk}


\subsubsection{CM Points and Pullbacks}

In this section, we compare the values of certain $p$-adic automorphic forms and $\ci$-automorphic forms at CM points.\footnote{The significance of CM points is that they correspond to points of $U(n)\times U(n)\subseteq U(n,n)$, which are the points used (for instance, by Shimura) to study algebraicity of values of Eisenstein series, which are used in turn to study algebraicity of values of certain $L$-functions (through the doubling method, or ``pull back method," a construction of $L$-functions described in various sources, including \cite[Part A]{GPSR} and \cite[Section 2]{co}).  For the reader who is unfamiliar with this area, we note that determining the {\it precise} values of these Eisenstein series at CM points is neither necessary nor generally computationally feasible at this time.}  This material extends \cite[Section 3.0.1]{AppToSHL} beyond the case of scalar weights.  Let $R$ be an $\OK$-subalgebra of $\overline{\IQ}\cap\iota_{\infty}^{-1}\left(\mathcal{O}_{\IC_p}\right)$ in which $p$ splits completely.  Note that the embeddings $\iota_\infty$ and $\iota_p$ restrict to $R$ to give embeddings
\begin{align*}
\iota_{\infty}: R&\hookrightarrow \IC\\
\iota_p: R&\hookrightarrow R_0 = \varprojlim_mR/p^mR.
\end{align*}
Let $\uA$ be a CM abelian variety with PEL structure over $R$ (i.e. a CM point of the moduli space $_K\Sh(R)$, or equivalently, a point of\ $Sh(U(n)\times U(n))\hookrightarrow Sh(U(n,n))$).  Note that by extending scalars, we may also view $\uA$ as an abelian variety over $\IC$ or $R_0$.  

By an argument similar to that in \cite[Section 3.0.1]{AppToSHL}, there are complex and $p$-adic periods $\Omega = \left(\Omega^+, \Omega^-\right)\in \left(\IC^\times\right)^n\times\left(\IC^\times\right)^n$ and $c =\left(c^+, c^-\right)\in \left(\mathcal{O}_{\IC_p}^\times\right)^n\times\left(\mathcal{O}_{\IC_p}^\times\right)^n$, respectively, attached to each CM abelian variety $\uA$ over $R$ such that (if $F$ is $R$-valued, so $G_{k, \nu, F}$ arises over $R$)
\begin{align}\label{AlgCMequ}
\left(\kappa\cdot\kappa_{k, \nu}\right)^{-1}(\Omega)&\prod_{\sigma\in\Sigma}\kappa_\sigma(2\pi i)\psi_{Z}(-k) G_{k, \nu, \zeta, F}\left(z; h, \chi, \mu, \frac{k}{2}\right)\nonumber \\
&=\left(\kappa\cdot\kappa_{k, \nu}\right)^{-1}(c)\theta(\zeta)G_{k, \nu, F}(\uA),
\end{align}
where $z$ is a point in $\prod_{\sigma\in\Sigma}\hn$ corresponding to the CM abelian variety $\uA$ viewed as an abelian variety over $\IC$ (by extending scalars to $\IC$).  Here, $Z$ is the irreducible subrepresentation of $\prod_{v\in\Sigma}\gln(\IC)\times\gln(\IC)$ of highest weight $\kappa\in (\ZZ^n)^\Sigma$ and has $\zeta$ as a highest weight vector; by $\kappa(a)$ with $a$ a scalar, we mean $\kappa$ evaluated at the $n$-tuple $(a, \ldots, a)$ in the torus.
(The periods $\Omega$ and $c$ can be defined uniformly for all CM points at once, as explained in \cite[Section 5.1]{kaCM}.  For the present paper, though, this is not necessary.)  Note that when $\kappa  = \det^d$ (i.e. is the highest weight for a $1$-dimensional representation), we recover \cite[Equation (45)]{AppToSHL}.

\subsection{Eisenstein Measures}\label{emeasuressection2}

In analogue with \cite[Lemma (4.2.0)]{kaCM} (which handles the case of Hilbert modular forms), we have the following lemma (which applies to all integers $n\geq 1$).
\begin{lem}\label{HFLem}  Let $R$ be a $p$-adic $\OK$-algebra.  Then the inverse constructions
\begin{align}
H\left(x, y\right) & = \frac{1}{\mathbf{N}_{n, 0}\left(x\mathbf{N}_{K/E}(x)^{-n}\det y\right)}F\left(x, y^{-1}\right)\label{HF1}\\
F\left(x, y\right) & = \frac{1}{\mathbf{N}_{n, 0}\left(x^{-1}\mathbf{N}_{K/E}(x)^{n}\det y\right)}H\left(x, y^{-1}\right)\label{HF2}
\end{align}
give an $R$-linear bijection between the set of continuous $R$-valued functions
\begin{align*}
F: \left(\OK\otimes\ZZ_p\right)^\times\times\gln\left(\Oe\otimes\ZZ_p\right)\rightarrow R
\end{align*}
 satisfying
 \begin{align*}
 F\left(ex, \mathbf{N}_{K/E}(e)^{-1} y\right) = \mathbf{N}_{n, 0}\left(e\right)F\left(x, y\right)
 \end{align*}
 for all $e\in\OK^\times$ and the set of continuous $R$-valued functions
 \begin{align*}
 H: \left(\OK\otimes\ZZ_p\right)^\times\times \gln\left(\Oe\otimes\ZZ_p\right)\rightarrow R
 \end{align*}
  satisfying
 \begin{align*}
H\left(ex, \mathbf{N}_{K/E}(e)y\right) = H\left(x, y\right)
 \end{align*}
 for all $e\in\OK^\times$.
\end{lem}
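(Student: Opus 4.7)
The plan is straightforward: the substantive content is (i) each formula respects the transformation law defining its target space, and (ii) the two formulas are mutually inverse. $R$-linearity is manifest from the definitions, so it needs no further argument.

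For (i), I would verify that starting from $F$ satisfying $F(ex, \mathbf{N}_{K/E}(e)^{-1}y)=\mathbf{N}_{n,0}(e)F(x,y)$ and defining $H$ by \eqref{HF1}, one has $H(ex,\mathbf{N}_{K/E}(e)y)=H(x,y)$ for every $e\in\OK^\times$. The computation is: the norm argument transforms as
\[
(ex)\,\mathbf{N}_{K/E}(ex)^{-n}\det(\mathbf{N}_{K/E}(e)y) \;=\; e\cdot x\,\mathbf{N}_{K/E}(x)^{-n}\det y,
\]
using $\det(\mathbf{N}_{K/E}(e)y) = \mathbf{N}_{K/E}(e)^n\det y$ (since $y$ is an $n\times n$ matrix) to cancel the factor $\mathbf{N}_{K/E}(e)^{-n}$. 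The multiplicativity of $\mathbf{N}_{n,0}$ then pulls out a factor of $\mathbf{N}_{n,0}(e)$ in the denominator. Simultaneously, $F(ex,(\mathbf{N}_{K/E}(e)y)^{-1}) = F(ex,\mathbf{N}_{K/E}(e)^{-1}y^{-1}) = \mathbf{N}_{n,0}(e)F(x,y^{-1})$ by hypothesis on $F$. The two factors of $\mathbf{N}_{n,0}(e)$ cancel, leaving $H(x,y)$. The reverse direction (that \eqref{HF2} produces an $F$ with the correct transformation law out of an $H$) is the same computation read backwards, using $\det(\mathbf{N}_{K/E}(e)^{-1}y) = \mathbf{N}_{K/E}(e)^{-n}\det y$.

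For (ii), I would substitute \eqref{HF2} into the right-hand side of \eqref{HF1} (and conversely). The two scaling factors multiply to
\[
\mathbf{N}_{n,0}\bigl(x\,\mathbf{N}_{K/E}(x)^{-n}\det y^{-1}\bigr)\cdot\mathbf{N}_{n,0}\bigl(x^{-1}\mathbf{N}_{K/E}(x)^{n}\det y\bigr) \;=\; \mathbf{N}_{n,0}(1)\;=\;1
\]
by multiplicativity, and the double substitution $y\mapsto y^{-1}\mapsto y$ is the identity. Well-definedness of every inversion is guaranteed by the hypotheses $x\in(\OK\otimes\ZZ_p)^\times$ and $y\in\gln(\Oe\otimes\ZZ_p)$, so every argument of $\mathbf{N}_{n,0}$ is a unit in $\OK\otimes\ZZ_p$; continuity of the bijection in both directions is automatic because inversion on $\gln(\Oe\otimes\ZZ_p)$, determinant, and $\mathbf{N}_{n,0}$ are continuous.

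There is no real obstacle here—the lemma reduces to an algebraic identity once the transformation laws and multiplicativity of $\mathbf{N}_{n,0}$ are unpacked. The only care required is bookkeeping of exponents of $\mathbf{N}_{K/E}(e)$ and remembering that $\det$ applied to a scalar multiple of an $n\times n$ matrix contributes an $n$th power.
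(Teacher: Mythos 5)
Your proposal is correct and is exactly the direct verification the paper has in mind; the paper's own proof is the one-line remark that the lemma ``follows immediately from the properties of $F$ and $H$,'' so you have simply written out the exponent bookkeeping (including the key cancellation $\det(\mathbf{N}_{K/E}(e)y)=\mathbf{N}_{K/E}(e)^n\det y$) that the paper leaves implicit.
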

\begin{proof}
The proof follows immediately from the properties of $F$ and $H$.
\end{proof}

Let 
\begin{align}\label{equgndefn}
\mathcal{G}_n =  \left(\left(\OK\otimes\ZZ_p\right)^\times\times \gln\left(\Oe\otimes\ZZ_p\right)\right)/\overline{\OK^\times},
\end{align}
 where $\overline{\OK^\times}$ denotes the $p$-adic closure of $\OK^\times$ embedded diagonally (as $(e, \mathbf{N}_{K/E}(e))$) in $\left(\OK\otimes\ZZ_p\right)^\times\times\gln\left(\Oe\otimes\ZZ_p\right)$ (and as before, $\left(\Oe\otimes\ZZ_p\right)^\times$ is embedded diagonally inside of $\gln\left(\Oe\otimes\ZZ_p\right)$).  Then Lemma \ref{HFLem} gives a bijection between the $R$-valued continuous functions $H$ on $\mathcal{G}_n$ and the $R$-valued continuous functions $F$ on $\left(\OK\otimes\ZZ_p\right)^\times\times\gln\left(\Oe\otimes\ZZ_p\right)$ satisfying  $F\left(ex, \mathbf{N}_{K/E}(e)^{-1} y\right) = \mathbf{N}_{n, 0}\left(e\right)F\left(x, y\right)$ for all $e\in\OK^\times$.
 
For any (profinite) $p$-adic ring $R$, an $R$-valued $p$-adic measure on a (profinite) compact, totally disconnected topological space $Y$ is a $\ZZ_p$-linear map
\begin{align*}
\mu: \mathcal{C}(Y, \ZZ_p)\rightarrow R,
\end{align*}
or equivalently (as explained in \cite[Section 4.0]{kaCM}), an $R'$-linear map
\begin{align*}
\mu: \mathcal{C}(Y, R')\rightarrow R
\end{align*}
for any $p$-adic ring $R'$ such that $R$ is an $R'$-algebra.  Instead of $\mu(f)$, one typically writes
\begin{align*}
\int_{Y}f d\mu.
\end{align*}
In Theorem \ref{emeasurethm}, we specialize to the case where $R$ is the ring $\Vnn$ of $p$-adic automorphic forms on $U(n,n)$ and $Y$ is the group $\mathcal{G}_n$ defined in Equation \eqref{equgndefn}.

\begin{thm}[A $p$-adic Eisenstein Measure for Vector-Weight Automorphic Forms]\label{emeasurethm}
Let $R$ be a profinite $p$-adic ring.  There is a $\Vnn$-valued $p$-adic measure $\mu = \mu_{\mathfrak{b}, n}$ on $\mathcal{G}_n$ defined by
\begin{align*}
\int_{\mathcal{G}_n}H d\mu_{\mathfrak{b}, n} = G_{n, 0, F}
\end{align*}
for all continuous $R$-valued functions $H$ on $\mathcal{G}_n$, with
\begin{align*}
F\left(x, y\right) & = \frac{1}{\mathbf{N}_{n, 0}\left(x^{-1}\mathbf{N}_{K/E}(x)^{n}\det y\right)}H\left(x, y^{-1}\right)
\end{align*}
extended by $0$ to all of $\left(\OK\otimes\ZZ_p\right)\times M_{n\times n}\left(\Oe\otimes\ZZ_p\right)$.
\end{thm}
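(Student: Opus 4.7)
The plan is to verify that the prescribed formula defines a $\Vnn$-valued $p$-adic measure on $\mathcal{G}_n$ by combining the two main tools already developed in Section \ref{emeasuressection1}: Lemma \ref{HFLem}, which identifies continuous functions on $\mathcal{G}_n$ with functions $F$ satisfying the transformation law needed for the Eisenstein construction, and Theorem \ref{thmcts}, which produces a $p$-adic automorphic form from such an $F$.

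First, given a continuous function $H \in \mathcal{C}(\mathcal{G}_n, R)$, I would pull it back to a continuous function on $(\OK \otimes \ZZ_p)^\times \times \gln(\Oe \otimes \ZZ_p)$ invariant under the diagonal action of $\overline{\OK^\times}$, and then apply Equation \eqref{HF2} to produce a continuous function $F$ satisfying $F(ex, \mathbf{N}_{K/E}(e)^{-1} y) = \mathbf{N}_{n,0}(e) F(x,y)$ for all $e \in \OK^\times$. Extending $F$ by zero to $(\OK \otimes \ZZ_p) \times M_{n \times n}(\Oe \otimes \ZZ_p)$, I invoke Theorem \ref{thmcts} with $(k,\nu) = (n,0)$ to produce the element $G_{n, 0, F} \in \Vnn$; setting $\int_{\mathcal{G}_n} H \, d\mu_{\mathfrak{b}, n} := G_{n, 0, F}$ defines the candidate measure.

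To confirm this is an honest $p$-adic measure, I would check $R$-linearity and continuity. Linearity is clear because both $H \mapsto F$ (Lemma \ref{HFLem}) and $F \mapsto G_{n,0,F}$ (Theorem \ref{thmcts}, with uniqueness provided by the $q$-expansion principle \cite[Cor.~10.4]{hi05}) are $R$-linear. For continuity, I would verify that $H \equiv 0 \pmod{p^j R}$ implies $G_{n,0,F(H)} \equiv 0 \pmod{p^j \Vnn}$. This reduces to inspection of the proof of Theorem \ref{thmcts}: the $p$-adic form is assembled as the limit of forms $G_{n, 0, F_j}$ for $F_j = F \bmod p^j$ via the $q$-expansion principle, so the construction commutes with reduction mod $p^j$, and the $q$-expansion coefficients from Theorem \ref{thmcts} are manifestly bounded in terms of the sup-norm of $F$.

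The only non-routine point, and therefore the main obstacle (though a mild one), is ensuring that the bijection of Lemma \ref{HFLem} is an isometry for the $p$-adic sup-norm, so that continuity really passes from $H$ to the assembled form. This reduces to observing that the normalizing factor $\mathbf{N}_{n,0}(x^{-1}\mathbf{N}_{K/E}(x)^n \det y)$ is a $p$-adic unit whenever $x \in (\OK \otimes \ZZ_p)^\times$ and $y \in \gln(\Oe \otimes \ZZ_p)$, which is immediate from the definition of $\mathbf{N}_{n,0}$ and the fact that $\det y$ and $x$ are $p$-adic units. With these observations in place, the theorem is a direct packaging of Lemma \ref{HFLem} and Theorem \ref{thmcts}.
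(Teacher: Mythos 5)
Your proposal is correct and follows essentially the same route as the paper: the measure is defined by composing the bijection of Lemma \ref{HFLem} with the construction $F\mapsto G_{n,0,F}$ of Theorem \ref{thmcts}, with linearity and $p$-adic continuity controlled via the $q$-expansion principle. The paper's proof states this in one line; your explicit verification that the normalizing factor is a $p$-adic unit (so the $H\leftrightarrow F$ correspondence preserves sup-norms) and that the construction commutes with reduction mod $p^j$ simply fills in details the paper leaves implicit.
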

\begin{proof}
Note that $F$ is the function corresponding to $H$ under the bijection in Lemma \ref{HFLem}.  The theorem then follows immediately from Theorem \ref{thmcts}, Corollary \ref{corcts}, Lemma \ref{HFLem}, and the $q$-expansion principle. 
\end{proof}
Note that the measure $\mu_{\mathfrak{b}, n}$ depends only upon $n$ and $\mathfrak{b}$.  In Section \ref{symplectickatz}, we relate the measure $\mu_{\mathfrak{b}, n}$ to the Eisenstein measure in \cite[Definition (4.2.5) and Equation (5.5.7)]{kaCM} and comment on how $\mu_{\mathfrak{b}, n}$ can be modified to the case of Siegel modular forms (i.e. automorphic forms on symplectic groups).

It follows from the definition of the measure $\mu_{\mathfrak{b}, n}$ in Theorem \ref{emeasurethm} that for each highest weight vector $\zeta_\kappa$ of highest weight $\kappa$,
\begin{align*}
\int_{\mathcal{G}_n}H(x, y)\zeta_\kappa\left(\mathbf{N}_{K/E}\left(x\right)y^{-1}\right) d\mu_{\mathfrak{b}, n} &= \theta\left(\zeta_\kappa\right) G_{n, 0, F(x, y)}.
\end{align*}
Now, let $\uA$ be an ordinary CM abelian variety with PEL structure over a subring $R$ of $\overline{\IQ}\cap\OCp$ (i.e. a CM point of the moduli space $_K\Sh(R)$, or equivalently, a point of $Sh(U(n)\times U(n))\hookrightarrow Sh(U(n,n))$).  As discussed above, by extending scalars, we may also view $\uA$ as an abelian variety over $\IC$ or over $R_0=\varprojlim_mR/p^mR$.  It follows from Equation \eqref{AlgCMequ} and Corollary \ref{corcts} that for $F(x,y)$ locally constant, supported on $\left(\OK\otimes\ZZ_p\right)^\times\times \gln\left(\Oe\otimes\ZZ_p\right)$, and satisfying
\begin{align*}
F\left(ex, \mathbf{N}_{K/E}(e)^{-1}y\right) = \mathbf{N}_{k, \nu}(e)F\left(x, y\right)
\end{align*}
for all $e\in \OK^\times$, $x\in \OK\otimes\ZZ_p$ and $y\in \gln\left(\Oe\otimes\ZZ_p\right)$,
\begin{align}
\left(\kappa\cdot\kappa_{k, \nu}\right)^{-1}&(c)\left(\int_{\mathcal{G}_n}\frac{1}{\mathbf{N}_{k, \nu}\left(x\mathbf{N}_{K/E}(x)^{-n}\det y\right)}F\left(x, y^{-1}\right)\zeta_\kappa\left(\mathbf{N}_{K/E}\left(x\right)y^{-1}\right) d\mu_{\mathfrak{b}, n}\right)(\uA)\\
&=\left(\kappa\cdot\kappa_{k, \nu}\right)^{-1}(\Omega)\prod_{\sigma\in\Sigma}\kappa_\sigma(2\pi i)\psi_{Z}(-k) G_{k, \nu, \zeta_\kappa, F}\left(z, \frac{k}{2}\right),\nonumber
\end{align}
and for any $d = \left(d_v\right)_{v\in\Sigma}\in\ZZ_{\geq 0}^\Sigma$,
\begin{align*}
\left(\kappa_{k+2d, \nu-d}\right)^{-1}&(c)\int_{\mathcal{G}_n}\frac{1}{\mathbf{N}_{k, \nu}\left(x\mathbf{N}_{K/E}(x)^{-n}\det y\right)}F\left(x, y^{-1}\right){\det}\left(\mathbf{N}_{K/E}\left(x\right)y^{-1}\right)^d d\mu_{\mathfrak{b}, n}\left(\uA\right)\\
&=\left(\kappa_{k+2d, \nu-d}\right)^{-1}(\Omega)\prod_{\sigma\in\Sigma}(2\pi i)^{nd}\psi_{Z}(-k)G_{k+2d, \nu-d, F(x, y)}\left(z, \frac{k}{2}\right),
\end{align*}
where $z$ is a point in $\prod_{\sigma\in\Sigma}\hn$ corresponding to the CM abelian variety $\uA$ viewed as an abelian variety over $\IC$ (by extending scalars to $\IC$) and $\Omega$ and $c$ are the periods from Equation \eqref{AlgCMequ}.  Here, $Z$ is the irreducible subrepresentation of $\prod_{\sigma\in\Sigma}\gln(\IC)\times\gln(\IC)$ of highest weight $\kappa$ and has $\zeta_\kappa$ as a highest weight vector; by $\kappa(a)$ with $a$ a scalar, we mean $\kappa$ evaluated at the $n$-tuple $(a, \ldots, a)$ in the torus.  

In other words, the $p$-adic measure $\mu_{\mathfrak{b}, n}$ allows us to $p$-adically interpolate the values of the $\ci$- (not necessarily holomorphic) function $G_{k, \nu, \zeta_\kappa, F}\left(z, \frac{k}{2}\right)$ at CM points $z$.
\begin{thm}\label{AVmeasure}
For each ordinary abelian variety $\uA$ defined over a (profinite) $p$-adic $\OK$-algebra $R_0$, there is an $R_0$-valued $p$-adic measure $\mu(\uA):=\mu_{\mathfrak{b}, n}(\uA)$ defined by
\begin{align*}
\int_{\mathcal{G}_n}H d\mu_{\mathfrak{b}, n}(\uA) = G_{n, 0, F}(\uA)
\end{align*}
for all continuous $R$-valued functions $H$ on $\mathcal{G}_n$, with
\begin{align*}
F\left(x, y\right) & = \frac{1}{\mathbf{N}_{n, 0}\left(x^{-1}\mathbf{N}_{K/E}(x)^{n}\det y\right)}H\left(x, y^{-1}\right)
\end{align*}
extended by $0$ to all of $\left(\OK\otimes\ZZ_p\right)\times M_{n\times n}\left(\Oe\otimes\ZZ_p\right)$.
When $R_0 =\varprojlim_mR/p^mR$ with $R\subseteq\bar{\IQ}$, $\uA$ is an ordinary CM point defined over $R$, and $F$ is a locally constant function supported on $\left(\OK\otimes\ZZ_p\right)^\times\times \gln\left(\Oe\otimes\ZZ_p\right)$ satisfying
\begin{align*}
F\left(ex, \mathbf{N}_{K/E}(e)^{-1}y\right) = \mathbf{N}_{k, \nu}(e)F\left(x, y\right)
\end{align*}
for all $e\in \OK^\times$, $x\in \OK\otimes\ZZ_p$, and $y\in \gln\left(\Oe\otimes\ZZ_p\right)$,
\begin{align*}
\left(\kappa\cdot\kappa_{k, \nu}\right)^{-1}&(c)\int_{\mathcal{G}_n}\frac{1}{\mathbf{N}_{k, \nu}\left(x\mathbf{N}_{K/E}(x)^{-n}\det y\right)}F\left(x, y^{-1}\right)\zeta_\kappa\left(\mathbf{N}_{K/E}\left(x\right)y^{-1}\right) d\mu_{\mathfrak{b}, n}(\uA)\\
& = \left(\kappa\cdot\kappa_{k, \nu}\right)^{-1}(\Omega)\prod_{\sigma\in\Sigma}\kappa_\sigma(2\pi i)\psi_{Z}(-k) G_{k, \nu, \zeta_\kappa, F}\left(z, \frac{k}{2}\right)
\end{align*}
with $z\in\prod_{v\in\Sigma}\hn$ corresponding to the ordinary CM abelian variety $\uA$ viewed as an abelian variety over $\IC$.
\end{thm}

The pullback of an automorphic form on $U(n,n)$ to $U(n)\times U(n)$ is automatically an automorphic form on the product of definite unitary groups $U(n)\times U(n)$.  So Theorem \ref{emeasurethm} also gives a $p$-adic measure with values in the space of automorphic forms on the product of definite unitary groups $U(n)\times U(n)$.  In \cite[Section 4]{emeasurenondefinite}, we explain how to modify our construction to obtain $p$-adic measures with values in the space of automorphic forms on certain non-definite groups.

\begin{rmk}[Relationship to the Eisenstein Measures in Section 4 of \cite{AppToSHL}]\label{rmkapptoshl}
For the curious reader, although we shall not need this remark anywhere else in this paper, we briefly explain the relationship between the measure $\mu_{\mathfrak{b}, n}$ defined in Theorem \ref{emeasurethm} and the measure $\phi$ defined in \cite[Theorem 20]{AppToSHL}.  For each $v\in\Sigma$, let $r_v = r(v)$ be a positive integer $\leq n$, and let $r = \left(r_v\right)_v\in\ZZ^\Sigma$.  As in \cite[Equation (33)]{AppToSHL}, let 
\begin{align}\label{Tofr}
T(r) = \prod_{v\in\Sigma}{\underbrace{{\Oe}_v^\times\times\cdots \times{\Oe}_v^\times}_{r_v \mbox{ copies}}}.
\end{align}
Let $\rho = \prod_{v\in\Sigma}\left(\rho_{1, v}, \ldots, \rho_{r(v), v}\right)$ be a $p$-adic character on $T(r)$ (i.e. $\rho\left(\left(\alpha_v\right)_{v\in\Sigma}\right) := \prod_{v\in\Sigma}\prod_{i = 1}^{r(v)}\rho_{i,v}\left(\alpha_v\right)$ for all $\alpha = \left(\alpha_v\right)_{v\in\Sigma}\in T(r)$), let $n=n_{1, v}+\cdots+ n_{r_v, v}$ be a partition of $n$ for each $v\in\Sigma$, and let $F_{\rho}$ be the function on $M_{n\times n}(E)$ defined by 
\begin{align*}
F_{\rho}(x) := \prod_{v\in\Sigma}\prod_{i=1}^{r(v)}\rho_{i, v}\left({\det}_{n_i}(x)\right),
\end{align*}
with $\det_j$ defined as on page \pageref{detidefn}.  Let $\chi$ be a $p$-adic function supported on $\left(\OK\otimes\ZZ_p\right)^\times/\overline{\OK^\times}$ and extended by $0$ to all of $\OK\otimes\ZZ_p$ .  Let $H_{\rho, \chi}$ be the function corresponding via the bijection in Lemma \ref{HFLem} to the function $F_{\rho, \chi}$ supported on $\mathcal{G}_n$ (and extended by $0$) defined by
\begin{align*}
F_{\rho, \chi}(x, y) = \chi(x)\mathbb{N}_{n, 0}(x)F_{\rho}(\mathbf{N}_{K/E}(x){ }^t y).
\end{align*}
Then
\begin{align*}
\int_{\mathcal{G}_n}H_{\rho, \chi}d\mu_{\mathfrak{b},n} = \int_{\left(\OK\otimes\ZZ_p\right)^\times/\overline{\OK}^\times\times T(r)} (\chi, \rho)d\phi.
\end{align*}
\end{rmk}

Note that the measure $\phi$ is dependent upon the choice of $r$ and the choice of the partition of $n$, while the measure $\mu_{\mathfrak{b}, n}$ is independent of both of these choices.

\section{Remarks about the case of symplectic groups, Siegel modular forms, and Katz's Eisenstein Measure for Hilbert Modular forms}\label{symplectickatz}

Note that the case of Siegel modular forms is quite similar.  We essentially just need to replace the CM field $K$ with the totally real field $E$ throughout.  Once we have replaced $K$ by $E$, $\mathbf{N}_{k, \nu}$ becomes $\mathbf{N}_{E/\IQ}^k$, and $\mathbf{N}_{K/E}$ becomes the identity map.  Consequently, Equations \eqref{HF1} and \eqref{HF2} become
\begin{align*}
H\left(x, y\right) & = \frac{1}{\mathbf{N}_{E/\IQ}\left(x^{1-n}\det y\right)^n}F\left(x, y^{-1}\right)\\
F\left(x, y\right) & = \frac{1}{\mathbf{N}_{E/\IQ}\left(x^{-1+n}\det y\right)^n}H\left(x, y^{-1}\right).
\end{align*}
To highlight the similarity with \cite[Section 4.2]{kaCM}, we note that when $n=1$, these equations become
\begin{align*}
H\left(x, y\right) & = \frac{1}{\mathbf{N}_{E/\IQ}\left(y\right)}F\left(x, y^{-1}\right)\\
F\left(x, y\right) & = \frac{1}{\mathbf{N}_{E/\IQ}\left( y\right)}H\left(x, y^{-1}\right).
\end{align*}
This relationship between $H$ and $F$ is similar to the relationship between the functions denoted $H$ and $F$ by Katz in \cite[Section 4.2]{kaCM}, which play a similar role to the functions we denoted by $H$ and $F$.  (The minor difference between Katz's relationship between $H$ and $F$ and ours is due the fact that throughout the paper, his $F(x, y)$ is our $F(y, x)$, i.e. our first variable plays the role of his second variable and vice versa throughout the paper.)

The differential operators are developed from the $\ci$-perspective simultaneously for both unitary and symplectic groups in \cite[Section 12]{shar}.  As noted on \cite[p. 4]{EDiffOps}, in \cite[Section 3.1.1]{EDiffOps}, and in \cite{padiffops1, padiffops2}, the algebraic geometric and $p$-adic formulation of the operators for Siegel modular forms (i.e. for symplectic groups) is similar.  In the case of Siegel modular forms, the algebraic geometric formulation of the differential operators is discussed in \cite[Section 4]{hasv}.  Also, the case of symplectic groups is handled directly alongside the case of unitary groups in Hida's discussion of $p$-adic automorphic forms in \cite[Chapter 8]{hida}.  So the construction in this paper carries over with only minor changes (essentially, replacing $K$ by $E$ throughout) to the case of symplectic groups over a totally real field $E$ and automorphic forms (Siegel modular forms) on those groups.

\subsection{The case $n=1$}\label{kacmnequals1}
Continuing with the symplectic case with $n=1$, Theorem \ref{proplc} becomes
\begin{thm}\label{proplckatz}
Let $R$ be an $\Oe$-algebra, let and let $k\geq 1$ be an integer.    For each locally constant function 
\begin{align*}
F: \left(\Oe\otimes\ZZ_p\right)\times \left(\Oe\otimes\ZZ_p\right)\rightarrow R
\end{align*}
supported on $\left(\Oe\otimes\ZZ_p\right)^\times\times \left(\Oe\otimes\ZZ_p\right)^\times$ which satisfies
\begin{align}\label{equnknualakacm2}
F\left(ex, e^{-1}y\right) = \mathbf{N}_{E/\IQ}(e)^kF\left(x, y\right)
\end{align}
for all $e\in \Oe^\times$, $x\in \Oe\otimes\ZZ_p$, and $y\in \Oe \otimes\ZZ_p$, there is a Hilbert modular form $G_{k, F}$ of weight $k$ defined over $R$ whose $q$-expansion at a cusp $m\in GM$ is of the form $\sum_{\beta>0}c(\beta)q^\beta$ (where $L_{m}$ is the lattice in $E$ determined by $m$), with $c(\beta)$ a finite $\ZZ$-linear combination of terms of the form
\begin{align*}
F\left(a, (a)^{-1}\beta\right)\mathbf{N}\left(a^{-1}\beta\right)^k\mathbf{N}_{E/\IQ}\left(\beta\right)^{-1}
\end{align*}
(where the linear combination is a sum over a finite set of $p$-integral $a\in E$ dependent upon $\beta$ and the choice of cusp $m\in GM$). 
\end{thm}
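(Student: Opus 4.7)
The plan is to proceed exactly as in the proof of Theorem \ref{proplc}, after making the substitutions dictated by Section \ref{symplectickatz} (replace $K$ by $E$ throughout, so that $\mathbf{N}_{k,\nu}$ becomes $\mathbf{N}_{E/\IQ}^k$ and $\mathbf{N}_{K/E}$ becomes the identity), and specialize to $n=1$. Under these substitutions, the transformation rule \eqref{equnknualakacm2} is the exact specialization of \eqref{equnknualakacm}, so all the structural arguments already in place apply.

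First, by an argument identical to the opening of the proof of Theorem \ref{proplc} (which in turn follows \cite[Theorem (3.2.3)]{kaCM}), any locally constant $R$-valued function $F$ on $(\Oe\otimes\ZZ_p)\times(\Oe\otimes\ZZ_p)$ supported on units and satisfying \eqref{equnknualakacm2} is an $R$-linear combination of $\Oe$-valued functions satisfying the same conditions. So it suffices to treat $\Oe$-valued $F$. Then, by the $q$-expansion principle for Hilbert modular forms (the direct analogue of \cite[Prop 7.1.2.15]{la}, which is precisely the principle used by Katz in \cite{kaCM}), it is enough to exhibit a $\IC$-valued $\ci$-Hilbert modular form $G_{k,F}(z,s)$ of weight $k$, holomorphic at $s = k/2$, whose Fourier coefficients at $s = k/2$ are $\ZZ$-linear combinations of the terms in the statement.

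Such a form is produced as a Siegel Eisenstein series, constructed exactly as in Section \ref{siegelchoice}. Namely: pick a unitary Hecke character $\chi$ of $\adeles_E^\times$ with $p$-power conductor and infinity type matching the weight $k$; choose the archimedean Siegel section $f_\infty^{k}$ of Section \ref{sectionsinfty} (with no vector-weight component needed since $n=1$, so the entire discussion of $Z$ and $\zeta$ collapses); use the Siegel section at $p$ attached to $\tilde{F}$ as in Corollary \ref{reallyanyF}, whose local Fourier coefficient at $\beta$ is $\tilde F(1, { }^t\beta)$; and use the Siegel section away from $p\infty$ of Section \ref{sectionsnotpinfty}. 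The global Fourier coefficient at $\beta > 0$ is then given by the product formula of Proposition \ref{globalcoeffsprop}, which in the present $n=1$ symplectic case specializes to a nonzero constant $D(1,E,\mathfrak{b},p,k)$ times the desired expression $F(a, a^{-1}\beta)\mathbf{N}(a^{-1}\beta)^k \mathbf{N}_{E/\IQ}(\beta)^{-1}$ (with the sum over $a$ arising exactly as in the discussion preceding Lemma \ref{lemprop8} by decomposing $F$ into pieces of the form $\tilde F_\chi$). Dividing through by $D(1,E,\mathfrak{b},p,k)$ produces the required $G_{k,F}$.

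The only genuine work is bookkeeping: one must check that each of the three local choices of Siegel section from Sections \ref{sectionsinfty}--\ref{sectionsnotpinfty}, as well as the archimedean coefficient formula \eqref{coeffinfty}, has a literal symplectic analogue, and that the formula $\mathbf{N}_{k,\nu}(a^{-1}\det\beta)\mathbf{N}_{E/\IQ}(\det\beta)^{-n}$ collapses to $\mathbf{N}(a^{-1}\beta)^k\mathbf{N}_{E/\IQ}(\beta)^{-1}$ when $K = E$ and $n=1$. The first point is covered by the general references to \cite{sh,shar,hida} cited at the top of Section \ref{symplectickatz}; the second is a direct computation. I expect the main potential obstacle to be verifying $p$-integrality of the constant $D(1, E, \mathfrak{b}, p, k)$ after specialization (so that dividing by it preserves the $\OK$-integral structure), but since the archimedean gamma factors and the finite Euler factors are those of Katz's Eisenstein measure in \cite[Section 3]{kaCM}, the integrality follows from exactly the normalizations used there.
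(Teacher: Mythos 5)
Your proposal is correct and follows essentially the same route as the paper: the paper gives no separate argument for Theorem \ref{proplckatz}, presenting it simply as the $n=1$, $K=E$ (symplectic/Hilbert) specialization of Theorem \ref{proplc} via the substitutions laid out at the start of Section \ref{symplectickatz}, which is exactly the reduction you carry out. Your write-up in fact supplies more of the bookkeeping (the reduction to $\Oe$-valued $F$, the three local Siegel sections, and the collapse of $\mathbf{N}_{k,\nu}\left(a^{-1}\det\beta\right)\mathbf{N}_{E/\IQ}\left(\det\beta\right)^{-n}$ to $\mathbf{N}\left(a^{-1}\beta\right)^{k}\mathbf{N}_{E/\IQ}\left(\beta\right)^{-1}$) than the paper itself records.
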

Still continuing with the symplectic case with $n=1$, Theorem \ref{emeasurethm} becomes
\begin{thm}\label{emeasurethmkatz}
There is a measure $\mu$ on 
\begin{align*}
\mathcal{G}=\left(\left(\Oe\otimes\ZZ_p\right)^\times\times \left(\Oe\otimes\ZZ_p\right)^\times\right)/\overline{\Oe^\times}
\end{align*}
 (with values in the space of $p$-adic Hilbert modular forms) defined by
\begin{align*}
\int_{\mathcal{G}}H d\mu = G_{1, F}
\end{align*}
for all continuous $R$-valued functions $H$ on $\mathcal{G}$, with
\begin{align*}
F\left(x, y\right) & = \frac{1}{\mathbf{N}_{E/\IQ} (y)}H\left(x, y^{-1}\right)
\end{align*}
extended by $0$ to all of $\left(\Oe\otimes\ZZ_p\right)\times\left(\Oe\otimes\ZZ_p\right)$. 
\end{thm}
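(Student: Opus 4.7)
The plan is to transport the proof of Theorem \ref{emeasurethm} into the symplectic setting with $n=1$, using the three ingredients that powered the unitary case: a continuous extension of Theorem \ref{proplckatz}, the inverse-bijection lemma relating $H$ and $F$, and the $q$-expansion principle (for $p$-adic Hilbert modular forms in place of $p$-adic automorphic forms on $U(n,n)$).

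First, I would prove the symplectic analogue of Theorem \ref{thmcts}: for any profinite $p$-adic $\Oe$-algebra $R$ and any continuous $F : (\Oe \otimes \ZZ_p) \times (\Oe \otimes \ZZ_p) \to R$ supported on $(\Oe\otimes \ZZ_p)^\times \times (\Oe\otimes \ZZ_p)^\times$ satisfying $F(ex, e^{-1}y) = \mathbf{N}_{E/\IQ}(e)^k F(x,y)$ for all $e \in \Oe^\times$, there is a $p$-adic Hilbert modular form $G_{k, F}$ with the $q$-expansion given by the combinations appearing in Theorem \ref{proplckatz}. Following Katz's idea in the proof of \cite[Theorem (3.4.1)]{kaCM}, I reduce $F$ modulo $p^j R$ to get a locally constant $F_j$ satisfying the same transformation property modulo $p^j R$, invoke Theorem \ref{proplckatz} to produce a Hilbert modular form $G_{k, F_j}$ over $R/p^j R$, and then glue these into the limit $G_{k, F}$ by means of the $q$-expansion principle for $p$-adic Hilbert modular forms (as in \cite[Section 8.4]{hida}).

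Second, I would verify the symplectic $n=1$ version of Lemma \ref{HFLem}: the formulas $H(x, y) = \mathbf{N}_{E/\IQ}(y)^{-1} F(x, y^{-1})$ and $F(x, y) = \mathbf{N}_{E/\IQ}(y)^{-1} H(x, y^{-1})$ give mutually inverse $R$-linear bijections between continuous $F$ on $(\Oe \otimes \ZZ_p)^\times \times (\Oe \otimes \ZZ_p)^\times$ satisfying $F(ex, e^{-1} y) = \mathbf{N}_{E/\IQ}(e) F(x,y)$ and continuous $H$ on the same space satisfying $H(ex, e y) = H(x,y)$. The latter is precisely the invariance needed for $H$ to descend to the quotient $\mathcal{G}$, so the continuous $R$-valued functions on $\mathcal{G}$ are in canonical bijection with the $k=1$ class of functions $F$ from the previous paragraph.

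Third, I would assemble the measure. Defining $\int_{\mathcal{G}} H\, d\mu := G_{1, F}$ with $F$ obtained from $H$ via the bijection (then extended by zero off the unit locus) produces a $\ZZ_p$-linear map from $\mathcal{C}(\mathcal{G}, \ZZ_p)$ into the space of $p$-adic Hilbert modular forms. The $\ZZ_p$-linearity in $H$ is automatic, since both the bijection $H \mapsto F$ and the form-construction $F \mapsto G_{1, F}$ are linear. The hardest bookkeeping is checking that the transformation property, support condition, and continuity of $F$ all survive reduction modulo $p^j$ and the inverse-limit process, so that the assignment $H \mapsto G_{1, F}$ genuinely lands in the space of $p$-adic Hilbert modular forms rather than in some formal completion of $q$-expansions; this is exactly what the Hilbert-modular $q$-expansion principle underlying \cite[Theorem (4.2.1)]{kaCM} supplies, and the theorem then follows formally.
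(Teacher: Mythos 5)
Your proposal is correct and follows essentially the same route as the paper: the paper presents Theorem \ref{emeasurethmkatz} as the direct specialization (replace $K$ by $E$, set $n=1$) of Theorem \ref{emeasurethm}, whose proof rests on exactly the three ingredients you assemble — the continuous extension of the locally constant case via reduction mod $p^j$ (Theorem \ref{thmcts}), the $H\leftrightarrow F$ bijection (Lemma \ref{HFLem}), and the $q$-expansion principle. Your write-up simply makes explicit the symplectic analogues that the paper leaves implicit in Section \ref{symplectickatz}.
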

Note that we have essentially recovered the Eisenstein series and measure from \cite[Definition (4.2.5)]{kaCM}.  (Again, the difference between Katz's order of the variables $x$ and $y$ and ours is due to the fact that throughout the paper, his $F(x, y)$ is our $F(y, x)$, i.e. our first variable plays the role of his second variable and vice versa throughout the paper.)  The reader familiar with \cite{kaCM} will notice the similarities with \cite[(5.5.1)-(5.5.7)]{kaCM}.  In particular, let $\chi$ be a Gr\"ossencharacter of the CM field $K$ whose conductor divides $p^\infty$ and whose infinity type is
\begin{align*}
-k\sum_{\sigma\in\Sigma}\sigma -\sum_{\sigma\in\Sigma}d(\sigma)\left(\sigma-\overline{\sigma}\right)
\end{align*}
with $d(\sigma)\geq 0$ for all $\sigma\in\Sigma$ and $k\geq n$.
We view $\chi$ as an $\OCp$-valued character on $\adeles^{\infty,\times}\times\prod_{v\in\Sigma}\overline{\IQ}$ (by restricting it to this group) and consider its restriction to the subring consisting of elements $((1_v)_{v\ndivides p\infty}, a, a)$, with $a\in \OK\otimes\ZZ_{(p)}$, which is a subring of 
\begin{align*}
\left(\OK\otimes\ZZ_p\right)^\times\isomto \left(\Oe\otimes\ZZ_p\right)^\times\times\left(\Oe\otimes\ZZ_p\right)^\times.
\end{align*}
Then we have
\begin{align*}
\chi\left(\alpha\right) &= \chi_{\mathrm{finite}}\left(\alpha\right)\cdot\frac{\prod_{\sigma\in\Sigma}\sigma(\bar{\alpha})^{d(\sigma)}}{\prod_{\sigma\in\Sigma}\sigma(\alpha)^{k+d(\sigma)}}\\
\chi\left(x, y\right) & =  \chi_{\mathrm{finite}}\left(x, y\right)\cdot\frac{\prod_{\sigma\in\Sigma}\sigma(x)^{d(\sigma)}}{\prod_{\sigma\in\Sigma}\sigma(y)^{k+d(\sigma)}},
\end{align*}
with $\chi_{\mathrm{finite}}$ a locally constant function.
If
\begin{align*}
F(x, y) &= \frac{1}{\mathbf{N}(y)}\chi\left(x, \frac{1}{y}\right)\label{ksim1},\\
& = \chi_{\mathrm{finite}}\left(x, \frac{1}{y}\right)\cdot\mathbf{N}(y)^{k-1}\prod_{\sigma\in\Sigma}\sigma\left(xy\right)^{d(\sigma)},
\end{align*}
then
\begin{align}
\int_{\mathcal{G}} \chi\left(x, y\right)d\mu_{\mathfrak{b}, 1} &= G_{1, F}\\
& = G_{1, \chi_{\mathrm{finite}}\left(x, \frac{1}{y}\right)\mathbf{N}(y)^{k-1}\prod_{\sigma\in\Sigma}\sigma\left(xy\right)^{d(\sigma)}}\\
& = G_{k, \chi_{\mathrm{finite}}\left(x, \frac{1}{y}\right)\prod_{\sigma\in\Sigma}\sigma\left(xy\right)^{d(\sigma)}}\\
&= \left(\prod_{\sigma\in\Sigma}\theta\left(\sigma\right)^{d(\sigma)}\right)\left(G_{k, \chi_{\mathrm{finite}\left(x, \frac{1}{y}\right)}}\right),\label{ksim2}
\end{align}
where $\theta(\sigma)$ denotes the ($\sigma$-component of the) differential operator $\theta(\det)$ acting on automorphic forms in the $1$-dimensional, symplectic case.
Note the similarity of Equations \eqref{ksim1} through \eqref{ksim2} with \cite[Equations (5.5.6)-(5.5.7)]{kaCM}.

\bibliography{eischen}   

\providecommand{\bysame}{\leavevmode\hbox to3em{\hrulefill}\thinspace}
\providecommand{\MR}{\relax\ifhmode\unskip\space\fi MR }
\providecommand{\MRhref}[2]{%
  \href{http://www.ams.org/mathscinet-getitem?mr=#1}{#2}
}
\providecommand{\href}[2]{#2}
\begin{thebibliography}{GPSR87}

\bibitem[AHR10]{AHR}
Matthew Ando, Michael Hopkins, and Charles Rezk, \emph{Multiplicative
  orientations of {$KO$}-theory and of the spectrum of topological modular
  forms}, 2010, \url{http://www.math.uiuc.edu/~mando/papers/koandtmf.pdf}.

\bibitem[Cog06]{co}
James Cogdell, \emph{Lectures on integral representation of {$L$}-functions},
  2006, Unpublished, available
  \url{http://www.math.osu.edu/~cogdell/columbia-www.pdf}.

\bibitem[CP04]{padiffops2}
Michel Courtieu and Alexei Panchishkin, \emph{Non-{A}rchimedean {$L$}-functions
  and arithmetical {S}iegel modular forms}, second ed., Lecture Notes in
  Mathematics, vol. 1471, Springer-Verlag, Berlin, 2004. \MR{2034949
  (2005e:11056)}

\bibitem[DR80]{DR}
Pierre Deligne and Kenneth~A. Ribet, \emph{Values of abelian {$L$}-functions at
  negative integers over totally real fields}, Invent. Math. \textbf{59}
  (1980), no.~3, 227--286. \MR{579702 (81m:12019)}

\bibitem[EHLS]{EEHLS}
Ellen~E. Eischen, Michael Harris, Jian-Shu Li, and Christopher~M. Skinner,
  \emph{$p$-adic {$L$}-functions for unitary groups}, In preparation.

\bibitem[Eis12]{EDiffOps}
Ellen~E. Eischen, \emph{{$p$}-adic differential operators on automorphic forms
  on unitary groups}, Ann. Inst. Fourier (Grenoble) \textbf{62} (2012), no.~1,
  177--243. \MR{2986270}

\bibitem[Eis13]{AppToSHL}
\bysame, \emph{A $p$-adic {E}isenstein measure for unitary groups}, Accepted
  for publication in the Journal f\"ur die reine und angewandte Mathematik
  (Crelle's Journal). 32 pages. DOI 10.1515/ crelle-2013-0008.

\bibitem[Eis14]{emeasurenondefinite}
Ellen Eischen, \emph{Differential operators, pullbacks, and families of
  automorphic forms}, 30 pages. Submitted. Available at
  http://arxiv.org/pdf/1405.0721.pdf.

\bibitem[GPSR87]{GPSR}
Stephen Gelbart, Ilya Piatetski-Shapiro, and Stephen Rallis, \emph{Explicit
  constructions of automorphic {$L$}-functions}, Lecture Notes in Mathematics,
  vol. 1254, Springer-Verlag, Berlin, 1987. \MR{MR892097 (89k:11038)}

\bibitem[Har81]{hasv}
Michael Harris, \emph{Special values of zeta functions attached to {S}iegel
  modular forms}, Ann. Sci. \'Ecole Norm. Sup. (4) \textbf{14} (1981), no.~1,
  77--120. \MR{MR618732 (82m:10046)}

\bibitem[Hid00]{hidaGME}
Haruzo Hida, \emph{Geometric modular forms and elliptic curves}, World
  Scientific Publishing Co. Inc., River Edge, NJ, 2000. \MR{1794402
  (2001j:11022)}

\bibitem[Hid04]{hida}
\bysame, \emph{{$p$}-adic automorphic forms on {S}himura varieties}, Springer
  Monographs in Mathematics, Springer-Verlag, New York, 2004. \MR{MR2055355
  (2005e:11054)}

\bibitem[Hid05]{hi05}
\bysame, \emph{{$p$}-adic automorphic forms on reductive groups}, Ast\'erisque
  (2005), no.~298, 147--254, Automorphic forms. I. \MR{MR2141703 (2006e:11060)}

\bibitem[HLS06]{HLS}
Michael Harris, Jian-Shu Li, and Christopher~M. Skinner, \emph{{$p$}-adic
  {$L$}-functions for unitary {S}himura varieties. {I}. {C}onstruction of the
  {E}isenstein measure}, Doc. Math. (2006), no.~Extra Vol., 393--464
  (electronic). \MR{MR2290594 (2008d:11042)}

\bibitem[Hop95]{hopkins94}
Michael~J. Hopkins, \emph{Topological modular forms, the {W}itten genus, and
  the theorem of the cube}, Proceedings of the {I}nternational {C}ongress of
  {M}athematicians, {V}ol.\ 1, 2 ({Z}\"urich, 1994) (Basel), Birkh\"auser,
  1995, pp.~554--565. \MR{1403956 (97i:11043)}

\bibitem[Hop02]{hopkinsicm}
M.~J. Hopkins, \emph{Algebraic topology and modular forms}, Proceedings of the
  {I}nternational {C}ongress of {M}athematicians, {V}ol. {I} ({B}eijing, 2002)
  (Beijing), Higher Ed. Press, 2002, pp.~291--317. \MR{1989190 (2004g:11032)}

\bibitem[Jan87]{jantzenRAG}
Jens~Carsten Jantzen, \emph{Representations of algebraic groups}, Pure and
  Applied Mathematics, vol. 131, Academic Press Inc., Boston, MA, 1987.
  \MR{899071 (89c:20001)}

\bibitem[Kat73]{ka2}
Nicholas~M. Katz, \emph{{$p$}-adic properties of modular schemes and modular
  forms}, Modular functions of one variable, {III} ({P}roc. {I}nternat.
  {S}ummer {S}chool, {U}niv. {A}ntwerp, {A}ntwerp, 1972), Springer, Berlin,
  1973, pp.~69--190. Lecture Notes in Mathematics, Vol. 350. \MR{MR0447119 (56
  \#5434)}

\bibitem[Kat78]{kaCM}
\bysame, \emph{{$p$}-adic {$L$}-functions for {CM} fields}, Invent. Math.
  \textbf{49} (1978), no.~3, 199--297. \MR{MR513095 (80h:10039)}

\bibitem[Lan12]{lanalgan}
Kai-Wen Lan, \emph{Comparison between analytic and algebraic constructions of
  toroidal compactifications of {PEL}-type {S}himura varieties}, J. Reine
  Angew. Math. \textbf{664} (2012), 163--228. \MR{2980135}

\bibitem[Lan13]{la}
\bysame, \emph{Arithmetic compactifications of {PEL}-type shimura varieties},
  London Mathematical Society Monographs, vol.~36, Princeton University Press,
  2013.

\bibitem[Pan05]{padiffops1}
A.~A. Panchishkin, \emph{The {M}aass-{S}himura differential operators and
  congruences between arithmetical {S}iegel modular forms}, Mosc. Math. J.
  \textbf{5} (2005), no.~4, 883--918, 973--974. \MR{2266464 (2007k:11072)}

\bibitem[Ser73]{serre}
Jean-Pierre Serre, \emph{Formes modulaires et fonctions z\^eta {$p$}-adiques},
  Modular functions of one variable, {III} ({P}roc. {I}nternat. {S}ummer
  {S}chool, {U}niv. {A}ntwerp, 1972), Springer, Berlin, 1973, pp.~191--268.
  Lecture Notes in Math., Vol. 350. \MR{MR0404145 (53 \#7949a)}

\bibitem[Shi83]{sheseries}
Goro Shimura, \emph{On {E}isenstein series}, Duke Math. J. \textbf{50} (1983),
  no.~2, 417--476. \MR{705034 (84k:10019)}

\bibitem[Shi84a]{sh84}
\bysame, \emph{Differential operators and the singular values of {E}isenstein
  series}, Duke Math. J. \textbf{51} (1984), no.~2, 261--329. \MR{MR747868
  (85h:11031)}

\bibitem[Shi84b]{shclassical}
\bysame, \emph{On differential operators attached to certain representations of
  classical groups}, Invent. Math. \textbf{77} (1984), no.~3, 463--488.
  \MR{759261 (86c:11034)}

\bibitem[Shi97]{sh}
\bysame, \emph{Euler products and {E}isenstein series}, CBMS Regional
  Conference Series in Mathematics, vol.~93, Published for the Conference Board
  of the Mathematical Sciences, Washington, DC, 1997. \MR{MR1450866
  (98h:11057)}

\bibitem[Shi00]{shar}
\bysame, \emph{Arithmeticity in the theory of automorphic forms}, Mathematical
  Surveys and Monographs, vol.~82, American Mathematical Society, Providence,
  RI, 2000. \MR{MR1780262 (2001k:11086)}

\bibitem[Tan99]{tan}
Victor Tan, \emph{Poles of {S}iegel {E}isenstein series on {${\rm U}(n,n)$}},
  Canad. J. Math. \textbf{51} (1999), no.~1, 164--175. \MR{1692899
  (2000e:11073)}

\end{thebibliography}

\end{document}